\newtheorem{theorem}{Theorem}
\newtheorem{corollary}[theorem]{Corollary}
\newtheorem{definition}[theorem]{Definition}
\newtheorem{remark}[theorem]{Remark}
\newenvironment{proof}[1][Proof]{\noindent\textbf{#1.} }{\ \rule{0.5em}{0.5em}}
\begin{document}

\title{The Fekete--Szeg\"{o} Coefficient Inequality For a New\\
Class of m-Fold Symmetric Bi-Univalent Functions Satisfying Subordination
Condition}
\author{Arzu Akg\"{u}l \\
Department of Mathematics, Faculty of Arts and Science,\\
Kocaeli University, Kocaeli, zmit, Turkey.\\
$^{\ast }$corresponding author. e-mail: akgul@kocaeli.edu.tr}
\maketitle

\begin{abstract}
In this paper, we investigate a new subclass $S_{\Sigma _{m}}^{\varphi
,\lambda }$ of $\Sigma _{m}$ consisting of analytic and \textit{m}-fold
symmetric bi-univalent functions satisfying subordination in the open unit
disk $U$. We consider the Fekete-Szeg\"{o} inequalities for this class.
Also, we establish estimates for the coefficients for this subclas and
several related classes are also considered and connections to earlier known
results are made.

Keywords: Analytic functions, \textit{m}-fold symmetric bi-univalent
functions, Coefficient bounds.

2010, Mathematics Subject Classification: 30C45, 30C50.
\end{abstract}

\section{Introduction and Definitions}

Let $A$ denote the class of functions of the form 
\begin{equation}
f(z)=z+\overset{\infty }{\underset{n=2}{\sum }}a_{n}z^{n},  \label{eq1}
\end{equation}%
which are analytic in the open unit disk $U=\left\{ z:\left\vert
z\right\vert <1\right\} $, and let $S$ be the subclass of $A$ consisting of
the form (\ref{eq1}) which are also univalent in $U.$

The Koebe one-quarter theorem \cite{Duren 83} states that the image of $U$
under every function $f$ \ from $S$ contains a disk of radius $\frac{1}{4}.$
Thus every such univalent function has an inverse $f^{-1}$ which satisfies%
\begin{equation*}
f^{-1}\left( f\left( z\right) \right) =z~~\left( z\in U\right)
\end{equation*}%
and%
\begin{equation*}
f\left( f^{-1}\left( w\right) \right) =w~~\left( \left\vert w\right\vert
<r_{0}\left( f\right) ~,~r_{0}\left( f\right) \geq \frac{1}{4}\right) ,
\end{equation*}%
where%
\begin{equation}
f^{-1}\left( w\right) =w~-a_{2}w^{2}+\left( 2a_{2}^{2}-a_{3}\right)
w^{3}-\left( 5a_{2}^{3}-5a_{2}a_{3}+a_{4}\right) w^{4}+\cdots .  \label{eq2}
\end{equation}

A function $f\in A$ is said to be bi-univalent in $U$ if both $f$ and $%
f^{-1} $ are univalent in $U.~$Let $\Sigma $ denote the class of
bi-univalent functions defined in the unit disk $U.$

For a brief history and interesting examples in the class $\Sigma ,$ see 
\cite{Srivastava 2010}. Although, the familier Koebe function is not in the
class of $\Sigma ,$there are some examples of functions member of $\Sigma $
, such as%
\begin{equation*}
\frac{z}{1-z},\ \ -\log (1-z),\ \ \ \frac{1}{2}\log \left( \frac{1+z}{1-z}%
\right)
\end{equation*}%
and so on. Other common examples of functions in $S$ for example%
\begin{equation*}
z-\frac{z^{2}}{2}\text{ and }\frac{z}{1-z^{2}}
\end{equation*}%
are also not members of $\Sigma $ (see \cite{Srivastava 2010}).

An analytic function $f$ is said to be subordinate to another analytic
function $g$, written

\begin{equation}
f(z)\prec g(z),  \label{eq3}
\end{equation}

provided that there is an analytic function $w$ defined on $U$ with 
\begin{equation*}
w(0)=0\text{ \ and \ }\left\vert w(z)\right\vert <1
\end{equation*}

satisfying the following condition:

\begin{equation*}
f(z)=g\left( w(z)\right) .
\end{equation*}

Lewin \cite{Lewin 67} is the first mathematician studied the class of
bi-univalent functions, obtaining the bound 1.51 for modulus of the second
coefficient $\left\vert a_{2}\right\vert .$ Subsequently, Brannan and Clunie 
\cite{Brannan and Clunie 80} conjectured that $\left\vert a_{2}\right\vert
\leq \sqrt{2}$ for $f\in \Sigma $ and Netanyahu \cite{Netanyahu 69} showed
that $max\left\vert a_{2}\right\vert =\frac{4}{3}.$ Brannan and Taha \cite%
{BrannanTaha} introduced certain subclasses of the bi-univalent function
class $\Sigma $ similar to the familiar subclasses. $S^{\star }\left( \beta
\right) $ and $K\left( \beta \right) $ of starlike and convex function of
order $\beta $ $\left( 0\leq \beta <1\right) $ in turn (see \cite{Netanyahu
69}). The classes $S_{\Sigma }^{\star }\left( \alpha \right) $ and $%
K_{\Sigma }\left( \alpha \right) $ of bi-starlike functions of order $\alpha 
$ and bi-convex functions of order $\alpha ,$ corresponding to the function
classes $S^{\star }\left( \alpha \right) $ and $K\left( \alpha \right) ,$
were also introduced analogously. For each of the function classes $%
S_{\Sigma }^{\star }\left( \alpha \right) $ and $K_{\Sigma }\left( \alpha
\right) ,$ they found non-sharp estimates on the initial coefficients. In
fact, the beforementioned work of Srivastava et al. \cite{Srivastava 2010}
fundamentally reviwed the investigation of diversified subclasses of the
bi-univalent function class $\Sigma $ in recent times. Recently, many
authors searched bounds for various subclasses of bi-univalent functions (%
\cite{Akgul} , \cite{Altinkaya}, \cite{Frasin 2011}, \cite{Magesh and Yamini
2013}, \cite{Srivastava 2010}, \cite{Srivastava 2013}, \cite{Xu and Gui 2012}%
). Not much is known about the bounds on the general coefficient$\
\left\vert a_{n}\right\vert $ for $n\geq 4.$ In the literature, the only a
few works determining the general coefficient bounds $\left\vert
a_{n}\right\vert $ for the analytic bi-univalent functions (\cite{Bulut 2014}%
, \cite{Hamidi and Jahangiri 2014}, \cite{Jahangiri and Hamidi 2013}). The
coefficient estimate problem for each of $\left\vert a_{n}\right\vert $ $%
\left( \ n\in 
\mathbb{N}
\backslash \left\{ 1,2\right\} ;\ \ 
\mathbb{N}
=\left\{ 1,2,3,...\right\} \right) $ is still an open problem.

For each function $f\in S$, the function%
\begin{equation}
h(z)=\sqrt[m]{f(z^{m})}\ \ \ \ \ \ \ \ \ \ \ (z\in U,\ \ m\in 
\mathbb{N}
)  \label{eq4}
\end{equation}%
is univalent and maps the unit disk $U$ into a region with \textit{m}-fold
symmetry. A function is said to be \textit{m}-fold symmetric (see \cite%
{Koepf 89}, \cite{Pommerenke 62}) if it has the following normalized form:%
\begin{equation}
f(z)=z+\overset{\infty }{\underset{k=1}{\sum }}a_{mk+1}z^{mk+1}\ \ \ \ \ \ \
(z\in U,\ \ m\in 
\mathbb{N}
).  \label{eq5}
\end{equation}

We symbolyze by $S_{m}$ the class of \textit{m}-fold symmetric univalent
functions in U, which are normalized by the series expansion (\ref{eq5}%
).Indeed, the functions in the class $S$ are \textit{one}-fold symmetric.

Similiar to the concept of \textit{m}-fold symmetric univalent functions,
here, in this work, we introduced the concept of \textit{m}-fold symmetric
bi-univalent functions. Each function $f\in \Sigma $ generates an \textit{m}%
-fold symmetric bi-univalent function for each integer $m\in 
\mathbb{N}
$. The normalized form of f is given by (\ref{eq5}) and the series expansion
for $f^{-1}$ is given by below:%
\begin{eqnarray}
g\left( w\right) &=&w~-a_{m+1}w^{m+1}+\left[ \left(
m+1)a_{m+1}^{2}-a_{2m+1}\right) \right] w^{2m+1}  \label{eq6} \\
&&-\left[ \frac{1}{2}(m+1)(3m+2)a_{m+1}^{3}-(3m+2)a_{m+1}a_{2m+1}+a_{3m+1}%
\right] w^{3m+1}+\cdots .  \notag
\end{eqnarray}%
where $f^{-1}=g.$ We denote by $\Sigma _{m}$ the class of \textit{m}-fold
symmetric bi-univalent functions in $U$. Taylor Maclaurin series expansion
of the inverse function of $f^{-1}$ has been recently proven by Srivastava
et al. \cite{Srivastava 2014}. For $m=1$, the formula (\ref{eq6}) induces
the formula (\ref{eq2}) of the class $\Sigma $. Some examples of \textit{m}%
-fold symmetric bi-univalent functions are given here below:%
\begin{equation*}
\left( \frac{z^{m}}{1-z^{m}}\right) ^{\frac{1}{m}},\ \ \left[ -\log (1-z^{m})%
\right] ^{\frac{1}{m}},\ \ \ \left[ \frac{1}{2}\log \left( \frac{1+z^{m}}{%
1-z^{m}}\right) ^{\frac{1}{m}}\right] .
\end{equation*}

In this work, the class of analytic functions of the form is

\begin{equation*}
p\left( z\right) =1+p_{1}z+p_{2}z^{2}+p_{3}z^{3}+\cdots \ 
\end{equation*}

such that

\begin{equation*}
R\left( p\left( z\right) \right) >0\text{ \ \ \ \ \ }(z\in U)
\end{equation*}%
holds and this class is denoted by $\mathcal{P}$.

In the work of Pommerenke \cite{Pommerenke 62}, the m-fold symmetric
function $p$ in the class $\mathcal{P}$ is given of the form:

\begin{equation}
p\left( z\right) =1+p_{m}z+p_{2m}z^{2m}+p_{3m}z^{3m}+\cdots \   \label{eq7}
\end{equation}

\bigskip

Throughout this study, $\varphi $ will be assumed as an analytic function
with positive real part in the unit disk $U$ such that

\begin{equation*}
\varphi (0)=1\text{ \ \ \ \ \ and \ \ \ \ \ }\varphi (0)>0.
\end{equation*}

and $\varphi (U)$ is symmetric with respect to the real axis. The function $%
\varphi $ has a series expansion of the form:

\begin{equation}
\varphi \left( z\right) =1+B_{1}z+B_{2}z^{2}+B_{3}z^{3}+\cdots \ \left(
B_{1}>0\right) .  \label{eq8}
\end{equation}

Let $u(z)$ and $v(z)$ be two analytic functions in the unit disk $U$ with 
\begin{equation*}
u(0)=v(0)=0\text{ \ \ \ \ and \ \ \ \ }max\{|u(z)|,|v(z)|\}<1.
\end{equation*}

We observe that

\begin{equation}
u(z)=b_{m}z^{m}+b_{2m}z^{2m}+b_{3m}z^{3m}+\text{\textperiodcentered
\textperiodcentered \textperiodcentered }  \label{eq9}
\end{equation}

and

\begin{equation}
v(w)=c_{m}w^{m}+c_{2m}w^{2m}+c_{3m}w^{3m}+\text{\textperiodcentered
\textperiodcentered \textperiodcentered .}  \label{eq10}
\end{equation}

Also we assume that 
\begin{equation}
|b_{m}|\leq 1,\text{ \ \ \ }|b_{2m}|\leq 1-|b_{m}|^{2}\text{ \ \ \ }%
,|c_{m}|\leq 1,\text{ \ \ \ }|c_{2m}|\leq 1-|c_{m}|^{2}  \label{eq11}
\end{equation}

Making some simple calculations we can notice that

\begin{equation}
\varphi \left( u\left( z\right) \right) =1+B_{1}b_{m}z^{m}+\left(
B_{1}b_{2m}+B_{2}b_{m}^{2}\right) z^{2m}+\text{\textperiodcentered
\textperiodcentered \textperiodcentered }\left( \left\vert z\right\vert
<1\right)  \label{eq12}
\end{equation}

\bigskip and

\begin{equation}
\varphi \left( v\left( w\right) \right) =1+B_{1}c_{m}w^{m}+\left(
B_{1}c_{2m}+B_{2}c_{m}^{2}\right) z^{2m}+\text{\textperiodcentered
\textperiodcentered \textperiodcentered }\left( \left\vert w\right\vert
<1\right) .  \label{eq13}
\end{equation}

\QTP{Body Math}
\bigskip

In this study, derived substantially by the work of Ma and Minda \cite{Ma}
and \cite{Srivastava 2014}, we introduce some new subclasses of m-fold
symmetric bi-univalent functions and obtain bounds for the Taylor-Maclaurin
coefficients $\left\vert a_{m+1}\right\vert $ and $\left\vert
a_{2m+1}\right\vert $ and Fekete-Szeg\"{o} functional problems for functions
in these new classes.

\begin{definition}
\label{defn2.1}A function $f\in \Sigma _{m}$ is said to be in the class $%
S_{\Sigma _{m}}^{\lambda }(\varphi )$ if the following conditions are
satisfied:%
\begin{equation*}
f\in \Sigma _{m},
\end{equation*}%
\begin{equation}
\begin{array}{cc}
\ \ \dfrac{zf^{\prime }(z)}{(1-\lambda )f(z)+\lambda zf^{\prime }(z)}\prec
\varphi (z) & \left( 0\leq \lambda <1,\ z\in U\right)%
\end{array}%
\   \label{eq13a}
\end{equation}%
and%
\begin{equation}
\begin{array}{cc}
\dfrac{\lambda g^{\prime }(w)}{(1-\lambda )g(w)+\lambda wg^{\prime }(w)}%
\prec \varphi (w) & \left( 0\leq \lambda <1,\ w\in U\right)%
\end{array}
\label{eq13b}
\end{equation}%
where the function $g=f^{-1},$ given by the (\ref{eq6}).
\end{definition}

\bigskip

\begin{remark}
\bigskip For the case of one fold symmetric functions the class $S_{\Sigma
_{m}}^{\lambda }(\varphi )$ reduces to the following classes:
\end{remark}

\begin{enumerate}
\item In the case of $m=1$ in Definition \ref{defn2.1}, we have the class 
\begin{equation*}
S_{\Sigma _{1}}^{\lambda }(\varphi )=\mathcal{G}_{\Sigma }^{\varphi ,\varphi
}(\gamma )
\end{equation*}%
investigated by Magesh and Yamini \cite{Magesh 2014} defined by requiring
that\bigskip 
\begin{equation*}
\begin{array}{cc}
\ \ \dfrac{zf^{\prime }(z)}{(1-\lambda )f(z)+\lambda zf^{\prime }(z)}\prec
\varphi (z) & \left( 0\leq \lambda <1,\ z\in U\right)%
\end{array}%
\end{equation*}%
$\ $and%
\begin{equation*}
\begin{array}{cc}
\dfrac{\lambda g^{\prime }(w)}{(1-\lambda )g(w)+\lambda wg^{\prime }(w)}%
\prec \varphi (w) & \left( 0\leq \lambda <1,\ w\in U\right) ,%
\end{array}%
\end{equation*}%
where the function $g=f^{-1}$ given by the equation (\ref{eq2}) .

\item In the case of $m=1$ and $\lambda =0$ in Definition \ref{defn2.1},
then we have the class $S_{\Sigma _{1}}^{0}(\varphi )$ which is the class of
Ma Minda starlike functions, introduced by Ma and Minda \cite{Ma}. This
class consits of the functions 
\begin{equation*}
\dfrac{zf^{\prime }(z)}{f(z)}\prec \varphi (z)
\end{equation*}

\item[3.] In the case of $m=1$ in Definition \ref{defn2.1}, for the
different choices of the function $\varphi (z)$ , we obtain interesting
known subclasses of analytic function class.
\end{enumerate}

\bigskip

For example, If we let \ 
\begin{equation*}
\varphi (z)=\frac{1+(1-2\beta )z}{1-z}\text{ or }\varphi (z)=\frac{%
1-(1-2\beta )z}{1-z},\left( 0\leq \beta <1,\ z\in U\right) 
\end{equation*}%
then the class $S_{\Sigma _{1}}^{\lambda }(\varphi )$ reduces to the class $%
S_{\Sigma _{1}}^{0}(\varphi )=\mathcal{SS}_{\Sigma }^{\ast }(\beta ,\lambda )
$. This class contains the functions satisfying the conditions\bigskip 
\begin{equation*}
\begin{array}{cc}
\ \ \left\vert \arg \left( \dfrac{zf^{\prime }(z)}{(1-\lambda )f(z)+\lambda
zf^{\prime }(z)}\right) \right\vert <\dfrac{\alpha \pi }{2} & \left(
0<\alpha \leq 1,0\leq \lambda <1,\ z\in U\right) 
\end{array}%
\ 
\end{equation*}%
and%
\begin{equation*}
\begin{array}{cc}
\left\vert \arg \left( \dfrac{\lambda g^{\prime }(w)}{(1-\lambda
)g(w)+\lambda wg^{\prime }(w)}\right) \right\vert <\dfrac{\alpha \pi }{2} & 
\left( 0<\alpha \leq 1,0\leq \lambda <1,\ w\in U\right) .%
\end{array}%
\end{equation*}

Similarly, \i f we let%
\begin{equation*}
\varphi (z)=\left( \frac{1+z}{1-z}\right) ^{\alpha }\text{ or }\varphi
(z)=\left( \frac{1-z}{1+z}\right) ^{\alpha },(0<\alpha \leq 1,z\in U)
\end{equation*}%
then the class $S_{\Sigma _{1}}^{\lambda }(\varphi )$ reduces to the class $%
S_{\Sigma _{1}}^{0}(\varphi )=\mathcal{SS}_{\Sigma }^{\ast }(\alpha ,\lambda
)$ . This class contains the functions satisfying the conditions 
\begin{equation*}
\begin{array}{cc}
f\in \Sigma ,\ \ \func{Re}\left( \dfrac{zf^{\prime }(z)}{(1-\lambda
)f(z)+\lambda zf^{\prime }(z)}\right) >\beta  & \left( 0\leq \beta <1,0\leq
\lambda <1,\ z\in U\right) 
\end{array}%
\ 
\end{equation*}%
and%
\begin{equation*}
\begin{array}{cc}
\func{Re}\left( \dfrac{wg^{\prime }(w)}{(1-\lambda )g(w)+\lambda wg^{\prime
}(w)}\right) >\beta  & \left( 0\leq \beta <1,0\leq \lambda <1,\ w\in
U\right) .%
\end{array}%
\end{equation*}
The classes $\mathcal{G}_{\Sigma }^{{}}(\beta ,\lambda )$ and $\mathcal{G}%
_{\Sigma }^{{}}(\alpha ,\lambda )$ \ were introduced and studied by
Murugusundaramoorthy et al. ( see Definition 1 and Definition 2 in \cite%
{Murugusundaramoorthy 2013}). \ Also if we choose $\lambda =0,$ $\mathcal{G}%
_{\Sigma }^{{}}(\beta ,0):$ $\mathcal{G}_{\Sigma }^{{}}\beta )$ and $%
\mathcal{G}_{\Sigma }^{{}}(\alpha ,0):\mathcal{G}_{\Sigma }^{{}}(\alpha ).$%
These classes are called bi-starlike functions of order $\beta $ and
strongly bi-starlike functions of order $\alpha $\ , respectively. The
classes $\mathcal{G}_{\Sigma }^{{}}(\beta )$ and $\mathcal{G}_{\Sigma
}^{{}}(\alpha )$ were investigated and studied by Brannan and Taha \cite%
{Brannan Taha} ( see Definition 1.1 and Definition 1.2) .

\bigskip

If we set $\lambda =0$ in Definition1, then the class $S_{\Sigma
_{m}}^{\lambda }(\varphi )$ reduces to the class $S_{\Sigma _{m}}^{\varphi }$
defined by below:

\begin{definition}
\label{defn2.2}\bigskip A function $f\in \Sigma _{m}$ is said to be in the
class $S_{\Sigma _{m}}^{\varphi }$ if the following conditions are satisfied:%
\begin{equation*}
f\in \Sigma _{m},
\end{equation*}%
\begin{equation*}
\begin{array}{cc}
\ \ \dfrac{zf^{\prime }(z)}{f(z)}\prec \varphi (z) & \left( 0\leq \lambda
<1,\ z\in U\right)%
\end{array}%
\ 
\end{equation*}%
and%
\begin{equation*}
\begin{array}{cc}
\dfrac{wg^{\prime }(w)}{g(w)}\prec \varphi (w) & \left( 0\leq \lambda <1,\
w\in U\right)%
\end{array}%
\end{equation*}%
where the function $g=f^{-1}$given by (\ref{eq6}).
\end{definition}

\bigskip

\begin{remark}
In the case of $m=1$ in Definition \ref{defn2.2}, it is interesting that,
also for $\lambda =0$ , the classes $S_{\Sigma _{1}}^{\lambda }(\frac{%
1+(1-2\beta )z}{1-z})$ and $S_{\Sigma _{1}}^{\lambda }(\left( \frac{1+z}{1-z}%
\right) ^{\alpha })$ \ lead the class $\delta _{\Sigma }^{\alpha }(\beta )$\
of bi-starlike functions of order $\alpha $ and $\delta _{\Sigma }^{\ast
}(\beta )$\ of bi-starlike functions of order $\beta $ , respectively.
\end{remark}

\bigskip 

For $m-fold$ \ symmetric ananlytic and bi-univalent functions, Alt\i nkaya
and Yal\c{c}\i n \cite{Altinkaya} \ defined and investigated the function
classes $S_{\Sigma _{m}}^{{}}(\beta ,\lambda )$ and $S_{\Sigma
_{m}}^{{}}(\alpha ,\lambda )$ as following.\bigskip 

\ A function $f\in \Sigma _{m}$ is said to be in the class $S_{\Sigma
_{m}}(\alpha ,\lambda )$ if the following conditions are satisfied:%
\begin{equation*}
\begin{array}{cc}
\ \ \left\vert \arg \left( \dfrac{zf^{\prime }(z)}{(1-\lambda )f(z)+\lambda
zf^{\prime }(z)}\right) \right\vert <\dfrac{\alpha \pi }{2} & \left(
0<\alpha \leq 1,0\leq \lambda <1,\ z\in U\right) 
\end{array}%
\ 
\end{equation*}%
and%
\begin{equation*}
\begin{array}{cc}
\left\vert \arg \left( \dfrac{\lambda g^{\prime }(w)}{(1-\lambda
)g(w)+\lambda wg^{\prime }(w)}\right) \right\vert <\dfrac{\alpha \pi }{2} & 
\left( 0<\alpha \leq 1,0\leq \lambda <1,\ w\in U\right) .%
\end{array}%
\end{equation*}

\bigskip In the same way, the function $f\in \Sigma _{m}$ is said to be in
the class $S_{\Sigma _{m}}(\beta ,\lambda )$ if the following conditions are
satisfied:%
\begin{equation*}
\begin{array}{cc}
f\in \Sigma ,\ \ \func{Re}\left( \dfrac{zf^{\prime }(z)}{(1-\lambda
)f(z)+\lambda zf^{\prime }(z)}\right) >\beta & \left( 0\leq \beta <1,0\leq
\lambda <1,\ z\in U\right)%
\end{array}%
\ 
\end{equation*}%
and%
\begin{equation*}
\begin{array}{cc}
\func{Re}\left( \dfrac{wg^{\prime }(w)}{(1-\lambda )g(w)+\lambda wg^{\prime
}(w)}\right) >\beta & \left( 0\leq \beta <1,0\leq \lambda <1,\ w\in U\right)%
\end{array}%
\end{equation*}%
where the function $g=f^{-1}$ given by (\ref{eq2}).

\begin{theorem}
\cite{Altinkaya} Let $\ f$ given by (\ref{eq4}) be in the class $S_{\Sigma
_{m}}(\alpha ,\lambda ),\ 0<\alpha \leq 1.$ Then 
\begin{equation*}
\left\vert a_{m+1}\right\vert \leq \frac{2\alpha }{m(1-\lambda )\sqrt{\alpha
+1}}
\end{equation*}%
and%
\begin{equation*}
\left\vert a_{2m+1}\right\vert \leq \frac{\alpha }{m(1-\lambda )}+\frac{%
2(m+1)\alpha ^{2}}{m^{2}(1-\lambda )^{2}}.
\end{equation*}
\end{theorem}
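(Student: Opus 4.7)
The plan is to follow the standard Brannan--Taha / Srivastava-style coefficient template adapted to the $m$-fold symmetric setting. Since $f \in S_{\Sigma_m}(\alpha,\lambda)$, by the definition of the $\arg$ condition there exist two analytic functions with positive real part whose Schur--Carath\'eodory series are $m$-fold symmetric, i.e.\ $p(z)=1+p_m z^m+p_{2m}z^{2m}+\cdots$ and $q(w)=1+q_m w^m+q_{2m}w^{2m}+\cdots$, such that
\begin{equation*}
\frac{zf'(z)}{(1-\lambda)f(z)+\lambda zf'(z)}=\bigl[p(z)\bigr]^{\alpha},\qquad
\frac{wg'(w)}{(1-\lambda)g(w)+\lambda wg'(w)}=\bigl[q(w)\bigr]^{\alpha},
\end{equation*}
with $g=f^{-1}$ given by (\ref{eq6}). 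The first step is to expand both sides as power series in $z^m$ (respectively $w^m$), using the $m$-fold symmetric expansion (\ref{eq5}) for $f$ together with (\ref{eq6}) for $g$, and the binomial expansion $[p(z)]^{\alpha}=1+\alpha p_m z^m+\bigl(\alpha p_{2m}+\tfrac{\alpha(\alpha-1)}{2}p_m^2\bigr)z^{2m}+\cdots$ (and similarly for $q$).

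Next I would compute the left-hand sides. A short calculation gives
\begin{equation*}
\frac{zf'(z)}{(1-\lambda)f(z)+\lambda zf'(z)}
=1+m(1-\lambda)a_{m+1}z^{m}+\Bigl[2m(1-\lambda)a_{2m+1}-m(1-\lambda)(1+\lambda m)a_{m+1}^{2}\Bigr]z^{2m}+\cdots,
\end{equation*}
and an analogous identity for $g$ with $a_{m+1}\mapsto -a_{m+1}$ and $a_{2m+1}\mapsto (m+1)a_{m+1}^{2}-a_{2m+1}$. Equating coefficients of $z^m$ and $z^{2m}$ on both sides yields four relations. From the two $z^m$ identities I obtain $m(1-\lambda)a_{m+1}=\alpha p_m=-\alpha q_m$, in particular $p_m^{2}=q_m^{2}$.

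Adding the two $z^{2m}$ identities eliminates $a_{2m+1}$ and, after substituting $p_m^{2}=q_m^{2}=m^{2}(1-\lambda)^{2}a_{m+1}^{2}/\alpha^{2}$, collapses to
\begin{equation*}
a_{m+1}^{2}\;=\;\frac{\alpha^{2}(p_{2m}+q_{2m})}{m^{2}(1-\lambda)^{2}(\alpha+1)}.
\end{equation*}
Applying the Carath\'eodory bound $|p_{2m}|,|q_{2m}|\le 2$ yields $|a_{m+1}|\le \frac{2\alpha}{m(1-\lambda)\sqrt{\alpha+1}}$. Subtracting the two $z^{2m}$ identities eliminates the $a_{m+1}^{2}$-terms in an asymmetric way and gives $a_{2m+1}=\tfrac{m+1}{2}a_{m+1}^{2}+\frac{\alpha(p_{2m}-q_{2m})}{4m(1-\lambda)}$; the triangle inequality together with the cruder bound $|a_{m+1}|^{2}\le \tfrac{4\alpha^{2}}{m^{2}(1-\lambda)^{2}}$ (coming directly from $m(1-\lambda)a_{m+1}=\alpha p_m$) produces the desired estimate on $|a_{2m+1}|$.

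The conceptual work is all in the bookkeeping, so the main obstacle is algebraic: one must correctly carry the $m$ and $\lambda$ through the expansion of $(1-\lambda)f+\lambda zf'$, the reciprocal series, and the fractional power $[p(z)]^{\alpha}$, and then verify that the coefficients of $a_{m+1}^{2}$ on the left of the added equations simplify exactly to $2m^{2}(1-\lambda)^{2}$, so that the factor $\alpha+1$ appears cleanly in the denominator. Everything else is standard Carath\'eodory-type estimation.
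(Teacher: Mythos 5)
Your reconstruction is correct: I verified the coefficient identities ($m(1-\lambda)a_{m+1}=\alpha p_m$, the $z^{2m}$ coefficient $2m(1-\lambda)a_{2m+1}-m(1-\lambda)(1+\lambda m)a_{m+1}^2$, the sum collapsing to $m^2(1-\lambda)^2\frac{\alpha+1}{\alpha}a_{m+1}^2=\alpha(p_{2m}+q_{2m})$, and the difference giving $a_{2m+1}=\frac{m+1}{2}a_{m+1}^2+\frac{\alpha(p_{2m}-q_{2m})}{4m(1-\lambda)}$), and with $|p_{2m}|,|q_{2m}|\le 2$ and $|p_m|\le 2$ these yield exactly the two stated bounds. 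Note that the paper itself gives no proof of this theorem --- it is quoted verbatim from \cite{Altinkaya} as background --- so the only in-paper comparison is with the proof of Theorem \ref{thm2.1}, which treats the subordination version of the class. Your argument follows the same template (equate coefficients of $z^m$ and $z^{2m}$ for $f$ and $g$, add to isolate $a_{m+1}^2$, subtract to isolate $a_{2m+1}$), but you work with Carath\'eodory functions $[p(z)]^{\alpha}$ and the crude bounds $|p_{km}|\le 2$, whereas the paper works with Schwarz functions $u,v$ and the sharper constraints (\ref{eq11}), $|b_{2m}|\le 1-|b_m|^2$. For the $\arg$-class at hand your route reproduces the cited estimates exactly; the paper's Schwarz-function refinement is what produces the case distinction and the extra $\sqrt{|B_1^2-2B_2|+B_1}$ factor in Theorem \ref{thm2.1}, which for $\varphi=\left(\frac{1+z}{1-z}\right)^{\alpha}$ does not coincide with the $\sqrt{\alpha+1}$ denominator here (since then $B_1^2-2B_2=0$), so the two methods are genuinely different in the bounds they deliver even though the bookkeeping is parallel.
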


\begin{theorem}
\cite{Altinkaya} Let $\ f$ given by (\ref{eq4}) be in the class $S_{\Sigma
_{m}}(\beta ,\lambda ),0\leq \beta <1.$ Then 
\begin{equation*}
\left\vert a_{m+1}\right\vert \leq \frac{\sqrt{2(1-\beta )}}{m(1-\lambda )}
\end{equation*}%
and%
\begin{equation*}
\left\vert a_{2m+1}\right\vert \leq \frac{(1-\beta )}{m(1-\lambda )}+\frac{%
2(m+1)\alpha ^{2}}{m^{2}(1-\lambda )^{2}}.
\end{equation*}
\end{theorem}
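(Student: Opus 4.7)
The plan is to adapt the standard Brannan--Taha scheme to the $m$-fold symmetric bi-univalent setting by recasting the real-part condition through Carath\'eodory functions. Since $f\in S_{\Sigma_m}(\beta,\lambda)$, I would introduce two $m$-fold symmetric functions $p,q\in\mathcal{P}$ of the form (\ref{eq7}) such that
\begin{equation*}
\frac{zf'(z)}{(1-\lambda)f(z)+\lambda zf'(z)} = \beta+(1-\beta)p(z),\qquad \frac{wg'(w)}{(1-\lambda)g(w)+\lambda wg'(w)} = \beta+(1-\beta)q(w),
\end{equation*}
with $g=f^{-1}$ given by (\ref{eq6}); the Carath\'eodory bounds $|p_{km}|,|q_{km}|\le 2$ will eventually close the estimates.

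The first step is to expand both ratios as power series up to order $2m$ in $z$ (respectively $w$). Using (\ref{eq5}) for $f$ and (\ref{eq6}) for $g$, and inverting the denominators as geometric series, comparison of the $z^m$ and $z^{2m}$ coefficients on each side produces four identities of the schematic form
\begin{align*}
m(1-\lambda)\,a_{m+1} &= (1-\beta)p_m,\\
-m(1-\lambda)\,a_{m+1} &= (1-\beta)q_m,\\
2m(1-\lambda)\,a_{2m+1}-m(1-\lambda)(1+\lambda m)\,a_{m+1}^{2} &= (1-\beta)p_{2m},\\
-2m(1-\lambda)\,a_{2m+1}+m(1-\lambda)(2m+1-\lambda m)\,a_{m+1}^{2} &= (1-\beta)q_{2m}.
\end{align*}

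For $|a_{m+1}|$, adding the two second-order identities cancels $a_{2m+1}$ and leaves $2m^{2}(1-\lambda)^{2}\,a_{m+1}^{2}=(1-\beta)(p_{2m}+q_{2m})$. Taking absolute values and using $|p_{2m}+q_{2m}|\le 4$ yields $|a_{m+1}|\le \sqrt{2(1-\beta)}/[m(1-\lambda)]$, the first claim.

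For $|a_{2m+1}|$, I would subtract the two second-order identities to solve for $a_{2m+1}$ as $\tfrac{m+1}{2}a_{m+1}^{2}+(1-\beta)(p_{2m}-q_{2m})/[4m(1-\lambda)]$. Substituting the \emph{first-order} modulus bound $|a_{m+1}|^{2}\le 4(1-\beta)^{2}/[m^{2}(1-\lambda)^{2}]$ together with $|p_{2m}-q_{2m}|\le 4$, the triangle inequality delivers the stated estimate. The main obstacle is the $z^{2m}$ bookkeeping: the denominator must be expanded to second order as a geometric series, and the quadratic $a_{m+1}^{2}$ contributions entering through (\ref{eq6}) must be tracked with correct signs. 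Once the four identities are in place, everything else reduces to Carath\'eodory bounds.
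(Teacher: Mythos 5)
Your proposal is correct: the four coefficient identities are exactly right (I checked the geometric-series expansion of the denominator; the $z^{2m}$ coefficients $2m(1-\lambda)a_{2m+1}-m(1-\lambda)(1+\lambda m)a_{m+1}^{2}$ and $-2m(1-\lambda)a_{2m+1}+m(1-\lambda)(2m+1-\lambda m)a_{m+1}^{2}$ agree with the paper's equations (\ref{eq18}) and (\ref{eq20})), and the add/subtract steps together with $\left\vert p_{km}\right\vert ,\left\vert q_{km}\right\vert \leq 2$ deliver both bounds. Note that the paper states this theorem with a citation and gives no proof of its own; your Carath\'{e}odory-function decomposition $\beta +(1-\beta )p(z)$ is precisely the standard route used in the cited source, and it parallels the paper's own proof of its main Theorem \ref{thm2.1}, which instead runs the same bookkeeping through Schwarz functions and the subordination $\varphi (u(z))$ with the constraints (\ref{eq11}). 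One small point: the $\alpha ^{2}$ appearing in the stated bound for $\left\vert a_{2m+1}\right\vert $ is a typographical carry-over from the preceding theorem and should read $(1-\beta )^{2}$; your derivation, which deliberately uses the first-order bound $\left\vert a_{m+1}\right\vert \leq 2(1-\beta )/[m(1-\lambda )]$ rather than the sharper second-order one, reproduces exactly the term $2(m+1)(1-\beta )^{2}/[m^{2}(1-\lambda )^{2}]$, confirming that reading.
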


\section{\protect\bigskip The Main Results and Their Consequences}

\begin{theorem}
\label{thm2.1}Let $\ f$ given by (\ref{eq5}) be in the class $S_{\Sigma
_{m}}^{\lambda }(\varphi ).$ Then 
\begin{equation}
\left\vert a_{m+1}\right\vert \leq \frac{B_{1}\sqrt{B_{1}}}{m(1-\lambda )%
\sqrt{\left\vert B_{1}^{2}-2B_{2}\right\vert +B_{1}}}  \label{eq14}
\end{equation}%
and%
\begin{equation}
\left\vert a_{2m+1}\right\vert \leq \left\{ 
\begin{array}{c}
\left( m+1-\frac{m(1-\lambda )}{B_{1}}\right) \frac{B_{1}^{3}}{%
2m^{2}(1-\lambda )^{2}\left[ B_{1}+\left\vert B_{1}^{2}-2B_{2}\right\vert %
\right] }+\frac{B_{1}}{2m(1-\lambda )},\text{ \ \ \ }B_{1}\geq \frac{%
m(1-\lambda )}{m+1} \\ 
\text{ \ \ \ }\frac{B_{1}}{2m(1-\lambda )},\text{ \ \ \ \ \ \ \ \ \ \ \ \ \
\ \ \ \ \ \ \ \ \ \ \ \ \ \ \ \ \ \ \ \ \ \ \ \ \ \ \ \ \ \ \ \ \ \ \ \ \ \
\ \ \ \ \ \ \ \ \ \ \ \ \ }B_{1}<\frac{m(1-\lambda )}{m+1}%
\end{array}%
\right.  \label{eq15}
\end{equation}
\end{theorem}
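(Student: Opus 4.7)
The plan is to convert the subordination conditions (\ref{eq13a})--(\ref{eq13b}) into algebraic relations among the Taylor coefficients of $f$, $g$, $u$, and $v$, and then extract the bounds by combining those relations with the classical Schwarz-type inequalities (\ref{eq11}). First I would invoke the subordination directly: there exist Schwarz functions $u, v$ of $m$-fold form as in (\ref{eq9})--(\ref{eq10}) so that the two ratios on the left equal $\varphi(u(z))$ and $\varphi(v(w))$ respectively, with the right-hand expansions given by (\ref{eq12}) and (\ref{eq13}). Using (\ref{eq5}) for $f$ and (\ref{eq6}) for $g=f^{-1}$, a short power-series computation produces the left-hand expansions; I would then match coefficients through order $z^{2m}$ to obtain four identities,
\begin{align*}
m(1-\lambda)\,a_{m+1} &= B_1 b_m, \\
2m(1-\lambda)\,a_{2m+1} - m(1-\lambda)(1+\lambda m)\,a_{m+1}^2 &= B_1 b_{2m} + B_2 b_m^2,
\end{align*}
together with the analogous pair from the $g$-side, obtained by replacing $a_{m+1}$ by $-a_{m+1}$ and $a_{2m+1}$ by $(m+1)a_{m+1}^2-a_{2m+1}$, and $(b_m,b_{2m})$ by $(c_m,c_{2m})$.

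For (\ref{eq14}), the two linear identities give $c_m=-b_m$ and $|b_m|^2 = m^2(1-\lambda)^2|a_{m+1}|^2/B_1^2$. Adding the two quadratic-level identities eliminates the $a_{2m+1}$ contribution and leaves a relation of the form $2m^2(1-\lambda)^2 a_{m+1}^2 = B_1(b_{2m}+c_{2m}) + 2B_2 b_m^2$. Substituting the formula for $b_m^2$, taking absolute values, and using $|b_{2m}|+|c_{2m}|\leq 2-2|b_m|^2$ from (\ref{eq11}) converts this into a linear inequality in $|a_{m+1}|^2$; rearranging that inequality yields (\ref{eq14}).

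For (\ref{eq15}), the strategy is to subtract rather than add the two quadratic-level identities. The $(1+\lambda m)a_{m+1}^2$ terms on the two sides cancel, the $a_{2m+1}$ contributions reinforce, and one arrives at $a_{2m+1} = \tfrac{m+1}{2}a_{m+1}^2 + B_1(b_{2m}-c_{2m})/[4m(1-\lambda)]$. Taking moduli with $|b_{2m}-c_{2m}|\leq 2-2|b_m|^2$ and replacing $|b_m|^2$ by $m^2(1-\lambda)^2|a_{m+1}|^2/B_1^2$ produces an estimate of the shape $|a_{2m+1}| \leq \tfrac12\bigl[(m+1)-m(1-\lambda)/B_1\bigr]|a_{m+1}|^2 + B_1/[2m(1-\lambda)]$. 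The threshold $B_1=m(1-\lambda)/(m+1)$ in (\ref{eq15}) is precisely where the bracketed coefficient changes sign: above it, I would insert the bound (\ref{eq14}) for $|a_{m+1}|^2$ to get the first branch; below it, the first term is nonpositive and may simply be discarded, giving the second branch.

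The main bookkeeping hurdle I foresee is the careful tracking of the $(1+\lambda m)$ and $(m+1)$ factors through both the $f$- and $g$-side expansions so that the subtraction step produces the clean identity for $a_{2m+1}$; once that identity is in hand, the case split and the Schwarz estimates are mechanical, and the bound (\ref{eq14}) feeds directly into the first branch of (\ref{eq15}).
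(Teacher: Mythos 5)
Your proposal reproduces the paper's own argument essentially step for step: the same four coefficient identities (\ref{eq17})--(\ref{eq20}), the relation $b_{m}=-c_{m}$, addition of the $z^{2m}$-level identities to isolate $a_{m+1}^{2}$, subtraction to obtain $a_{2m+1}=\tfrac{m+1}{2}a_{m+1}^{2}+\tfrac{B_{1}(b_{2m}-c_{2m})}{4m(1-\lambda )}$, the Schwarz estimates (\ref{eq11}), and the identical sign-based case split at $B_{1}=\tfrac{m(1-\lambda )}{m+1}$. The only caveat is that your (correctly stated) relation $2m^{2}(1-\lambda )^{2}a_{m+1}^{2}=B_{1}(b_{2m}+c_{2m})+2B_{2}b_{m}^{2}$, after substituting $b_{m}^{2}=m^{2}(1-\lambda )^{2}a_{m+1}^{2}/B_{1}^{2}$, actually yields the factor $\left\vert B_{1}^{2}-B_{2}\right\vert $ rather than $\left\vert B_{1}^{2}-2B_{2}\right\vert $ in (\ref{eq14}) --- but this same discrepancy occurs in the paper's own passage from (\ref{eq22}) to (\ref{eq23}), so your route and the paper's coincide even in that respect.
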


\begin{proof}
Let $\ f\in S_{\Sigma _{m}}^{\lambda }(\varphi ).$ Then there are analytic
functions $u:U\rightarrow U$ and $v:U\rightarrow U$, with%
\begin{equation*}
u(0)=v(0)=0,
\end{equation*}%
satisfying the following conditions:

\begin{equation}
\begin{array}{c}
\dfrac{zf^{\prime }(z)}{(1-\lambda )f(z)+\lambda zf^{\prime }(z)}=\varphi
(u(z)) \\ 
\text{and} \\ 
\dfrac{\lambda g^{\prime }(w)}{(1-\lambda )g(w)+\lambda wg^{\prime }(w)}%
=\varphi (v(w)).%
\end{array}
\label{eq16}
\end{equation}

Using the equalities (\ref{eq12}), (\ref{eq13}) in (\ref{eq16}) and
comparing the coefficient of (\ref{eq16}) , we have%
\begin{equation}
m(1-\lambda )a_{m+1}=B_{1}b_{m},  \label{eq17}
\end{equation}%
\begin{equation}
m(1-\lambda )\left[ 2a_{2m+1}-(\lambda m+1)a_{m+1}^{2}\right]
=B_{1}b_{2m}+B_{2}b_{m}^{2},  \label{eq18}
\end{equation}%
and%
\begin{equation}
-m(1-\lambda )a_{m+1}=B_{1}c_{m},  \label{eq19}
\end{equation}%
\begin{equation}
m(1-\lambda )\left[ \left( 1+m(2-\lambda ))a_{m+1}^{2}-2a_{2m+1}\right) %
\right] =B_{1}c_{2m}+B_{2}c_{m}^{2}.  \label{eq20}
\end{equation}%
From (\ref{eq17}) and (\ref{eq19}) we find that 
\begin{equation}
b_{m}=-c_{m}.  \label{eq21}
\end{equation}%
Adding (\ref{eq18}) and (\ref{eq20}), we get 
\begin{equation}
2m^{2}(1-\lambda )^{2}a_{m+1}^{2}=B_{1}\left( b_{2m}+c_{2m}\right)
+B_{2}(b_{m}^{2}+c_{m}^{2}).  \label{eq22}
\end{equation}%
and using the relation (\ref{eq17}) and (\ref{eq21}) in (\ref{eq22}), we
have 
\begin{equation*}
2m^{2}(1-\lambda )^{2}a_{m+1}^{2}=B_{1}\left( b_{2m}+c_{2m}\right) +\frac{%
2B_{2}m^{2}(1-\lambda )^{2}}{B_{1}^{2}}a_{m+1}^{2}.
\end{equation*}%
Therefore, by a simple calculation we get 
\begin{equation}
2m^{2}(1-\lambda )^{2}\left( B_{1}^{2}-2B_{2}\right)
a_{m+1}^{2}=B_{1}^{3}\left( b_{2m}+c_{2m}\right)  \label{eq23}
\end{equation}%
By using the inequalities given by (\ref{eq11}) in (\ref{eq23}) for the
coefficients $b_{2m}$ and $c_{2m}$, we obtain%
\begin{equation}
\left\vert 2m^{2}(1-\lambda )^{2}\left( B_{1}^{2}-2B_{2}\right)
a_{m+1}^{2}\right\vert \leq 2B_{1}^{3}\left( 1-\left\vert
b_{m}^{2}\right\vert \right) .  \label{eq24}
\end{equation}%
Also by using (\ref{eq17}) in (\ref{eq24}) we have

\begin{equation*}
\left\vert a_{m+1}\right\vert ^{2}\leq \frac{B_{1}^{3}}{m^{2}(1-\lambda
)^{2}\left( \left\vert B_{1}^{2}-2B_{2}\right\vert +B_{1}\right) }
\end{equation*}%
which implies the assertion (\ref{eq14}).

Next, in order to find the bound on $\left\vert a_{2m+1}\right\vert ,$ by
subtracting (\ref{eq20}) from (\ref{eq18}), we obtain%
\begin{equation}
4m(1-\lambda )a_{2m+1}=2m(m+1)(1-\lambda )a_{m+1}^{2}+B_{1}\left(
b_{2m}-c_{2m}\right) .  \label{eq25}
\end{equation}%
Then, in view of (\ref{eq17}) ,(\ref{eq21}) and (\ref{eq25}) , applying the
inequalities in (\ref{eq11}) for the coefficients $p_{2m},p_{m}$ $q_{m\text{
\ }}$and $q_{2m},$we have 
\begin{equation*}
\left\vert a_{2m+1}\right\vert \leq \left( (m+1)-\frac{m(1-\lambda )}{B_{1}}%
\right) \frac{B_{1}^{3}}{2m^{2}(1-\lambda )^{2}\left[ B_{1}+\left\vert
B_{1}^{2}-2B_{2}\right\vert \right] }+\frac{B_{1}}{2m(1-\lambda )}
\end{equation*}%
which implies the assertion (\ref{eq15}) .This completes the proof of
Theorem \ref{thm2.1}.
\end{proof}

For $\lambda =0,$ we can state the following corollary:

\begin{corollary}
Let $\ f$ given by (\ref{eq5}) be in the class $S_{\Sigma _{m}}^{{}}(\varphi
).$ Then 
\begin{equation}
\left\vert a_{m+1}\right\vert \leq \frac{B_{1}\sqrt{B_{1}}}{m\sqrt{%
\left\vert B_{1}^{2}-2B_{2}\right\vert +B_{1}}}
\end{equation}%
and%
\begin{equation}
\left\vert a_{2m+1}\right\vert \leq \left\{ 
\begin{array}{c}
\left( m+1-\frac{m}{B_{1}}\right) \frac{B_{1}^{3}}{2m^{2}\left[
B_{1}+\left\vert B_{1}^{2}-2B_{2}\right\vert \right] }+\frac{B_{1}}{2m)},%
\text{ \ \ \ }B_{1}\geq \frac{m}{m+1} \\ 
\text{ \ \ \ }\frac{B_{1}}{2m},\text{ \ \ \ \ \ \ \ \ \ \ \ \ \ \ \ \ \ \ \
\ \ \ \ \ \ \ \ \ \ \ \ \ \ \ \ \ \ \ \ \ \ \ \ \ \ \ \ \ \ \ \ \ \ \ \ \ \
\ \ \ \ \ \ \ }B_{1}<\frac{m}{m+1}%
\end{array}%
\right.
\end{equation}
\end{corollary}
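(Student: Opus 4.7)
The plan is to derive this corollary as an immediate specialization of Theorem \ref{thm2.1} to the parameter value $\lambda=0$. First I would invoke the observation recorded just before Definition \ref{defn2.2}, namely that setting $\lambda=0$ in Definition \ref{defn2.1} collapses $S_{\Sigma _{m}}^{\lambda }(\varphi)$ to the class $S_{\Sigma _{m}}^{\varphi}$ of Definition \ref{defn2.2}. Thus every $f\in S_{\Sigma _{m}}^{\varphi}$ belongs to $S_{\Sigma _{m}}^{0}(\varphi)$, and the two estimates (\ref{eq14}) and (\ref{eq15}) of Theorem \ref{thm2.1} apply directly with $\lambda$ set to $0$; in particular there is no need to rederive the coefficient relations (\ref{eq17})--(\ref{eq20}) for this special case.

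The remaining step is purely algebraic: substitute $\lambda=0$ into (\ref{eq14}) and (\ref{eq15}). The factor $m(1-\lambda)$ that permeates the bounds of Theorem \ref{thm2.1} simplifies to $m$, turning (\ref{eq14}) into the claimed estimate for $|a_{m+1}|$ and the piecewise formula (\ref{eq15}) into the claimed piecewise bound for $|a_{2m+1}|$, with the cutoff $B_{1}\geq \tfrac{m(1-\lambda)}{m+1}$ becoming $B_{1}\geq \tfrac{m}{m+1}$. No fresh Schwarz-type estimate or manipulation of the coefficients $b_{m},b_{2m},c_{m},c_{2m}$ is required beyond what Theorem \ref{thm2.1} already supplies, so I do not anticipate a genuine obstacle; the only care needed is to verify that each appearance of $1-\lambda$ has actually been tracked in the original proof, so that the substitution is clean. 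In short, the proof is a one-line specialization of the preceding theorem.
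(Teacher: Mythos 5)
Your proposal is correct and matches the paper exactly: the paper introduces this corollary with the phrase ``For $\lambda =0$, we can state the following corollary'' and offers no further proof, treating it precisely as the direct substitution of $\lambda =0$ into (\ref{eq14}) and (\ref{eq15}) of Theorem \ref{thm2.1} that you describe. Nothing further is needed.
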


\bigskip

For one fold symmetric functions, we obtain the following corollaries:

\begin{corollary}
If the function $f\in \Sigma $ is in the class of $S_{\Sigma _{1}}^{\lambda
}(\varphi )=\mathcal{G}_{\Sigma }^{\varphi ,\varphi }(\gamma ),$ then 
\begin{equation}
\left\vert a_{2}\right\vert \leq \frac{B_{1}\sqrt{B_{1}}}{(1-\lambda )\sqrt{%
\left\vert B_{1}^{2}-2B_{2}\right\vert +B_{1}}}
\end{equation}%
and%
\begin{equation}
\left\vert a_{3}\right\vert \leq \left\{ 
\begin{array}{ccc}
\left( 2-\frac{(1-\lambda )}{B_{1}}\right) \frac{B_{1}^{3}}{2(1-\lambda
)^{2}+B_{1}\left\vert B_{1}^{2}-2B_{2}\right\vert }+\frac{B_{1}}{2(1-\lambda
)},\text{ } &  & \text{for \ \ \ \ \ \ \ }B_{1}\geq \frac{1-\lambda }{2} \\ 
\text{\ \ \ }\frac{B_{1}}{2(1-\lambda )},\text{ \ \ \ \ } &  & \text{for \ \
\ \ \ \ \ }B_{1}<\frac{1-\lambda }{2}%
\end{array}%
\right. .
\end{equation}
\end{corollary}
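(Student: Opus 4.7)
My plan is to recognise that this corollary is the $m=1$ specialisation of Theorem \ref{thm2.1}, so the whole proof reduces to invoking that theorem and substituting $m=1$. The first task is to confirm the class identification: by item (1) of the Remark following Definition \ref{defn2.1}, the class $S_{\Sigma_{1}}^{\lambda}(\varphi)$ is definitionally the same as $\mathcal{G}_{\Sigma}^{\varphi,\varphi}(\gamma)$, so every $f$ satisfying the hypothesis of the corollary is covered by Theorem \ref{thm2.1} with $m=1$.

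Next I substitute $m=1$ into the estimate (\ref{eq14}). The denominator factor $m(1-\lambda)$ collapses to $(1-\lambda)$ and $a_{m+1}$ becomes $a_{2}$, which yields the asserted bound on $|a_{2}|$ verbatim.

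For the bound on $|a_{3}|$, I specialise (\ref{eq15}) at $m=1$ in three places: the threshold $B_{1}\geq m(1-\lambda)/(m+1)$ becomes $B_{1}\geq (1-\lambda)/2$; the leading coefficient $m+1-m(1-\lambda)/B_{1}$ becomes $2-(1-\lambda)/B_{1}$; the denominator $2m^{2}(1-\lambda)^{2}[B_{1}+|B_{1}^{2}-2B_{2}|]$ becomes $2(1-\lambda)^{2}[B_{1}+|B_{1}^{2}-2B_{2}|]$; and the additive term $B_{1}/(2m(1-\lambda))$ becomes $B_{1}/(2(1-\lambda))$. On the complementary range $B_{1}<(1-\lambda)/2$, the bound simply reduces to $B_{1}/(2(1-\lambda))$, matching the lower branch.

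Since the corollary is a pure specialisation of Theorem \ref{thm2.1}, there is no mathematical obstacle. The only point requiring a little attention is the routine algebraic simplification of the first branch of the $|a_{3}|$ estimate, where one must factor the $B_{1}$ out of the bracket in the denominator correctly so that the resulting expression agrees with the displayed form in the corollary.
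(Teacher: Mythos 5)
Your approach is exactly the paper's: the corollary carries no separate proof and is obtained by setting $m=1$ in Theorem \ref{thm2.1}, and your substitutions into (\ref{eq14}) and (\ref{eq15}) are the correct ones. One caution: the denominator you correctly derive, $2(1-\lambda)^{2}\left[ B_{1}+\left\vert B_{1}^{2}-2B_{2}\right\vert \right]$, is \emph{not} algebraically equal to the printed $2(1-\lambda)^{2}+B_{1}\left\vert B_{1}^{2}-2B_{2}\right\vert$, and no "factoring of $B_{1}$ out of the bracket" will reconcile them --- the displayed form in the corollary is a typographical error in the paper, so you should not present that last step as a routine simplification you have verified.
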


\bigskip 

For the case of one-fold symmetric functions and for $\varphi (z)=\left( 
\frac{1+z}{1-z}\right) ^{\alpha }=1+2\alpha z+2\alpha ^{2}z^{2}+...,$
Theorem 1 reduces to the following result:

\begin{corollary}
If the function $f\in \Sigma $ is in the class of $S_{\Sigma _{1}}^{\lambda
}(\left( \frac{1+z}{1-z}\right) ^{\alpha }),$ then%
\begin{equation}
\left\vert a_{2}\right\vert \leq \frac{2\alpha }{(1-\lambda )}
\end{equation}%
and%
\begin{equation}
\left\vert a_{3}\right\vert \leq \left\{ 
\begin{array}{ccc}
\frac{4\alpha ^{2}}{(1-\lambda )^{2}},\text{ } &  & \text{for \ \ \ \ \ \ \ }%
\alpha \geq \frac{1-\lambda }{4} \\ 
\text{\ \ \ }\frac{\alpha }{(1-\lambda )},\text{ \ \ \ \ } &  & \text{for \
\ \ \ \ \ \ }\alpha <\frac{1-\lambda }{4}%
\end{array}%
\right. .
\end{equation}
\end{corollary}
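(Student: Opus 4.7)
The plan is to derive both inequalities as a direct specialization of Theorem \ref{thm2.1}, with the choices $m=1$ and $\varphi(z)=\left(\frac{1+z}{1-z}\right)^{\alpha}$. First I would extract the Taylor coefficients of $\varphi$: using the expansion supplied in the statement, $\varphi(z)=1+2\alpha z+2\alpha^{2}z^{2}+\cdots$, so that $B_{1}=2\alpha$ and $B_{2}=2\alpha^{2}$. The key observation to make immediately is that
\[
B_{1}^{2}-2B_{2}=4\alpha^{2}-4\alpha^{2}=0,
\]
so $|B_{1}^{2}-2B_{2}|+B_{1}=2\alpha$. This collapse is what makes the bounds simplify so cleanly, and spotting it up front lets one avoid any messy algebra.

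Next I would plug $m=1$, $B_{1}=2\alpha$, $|B_{1}^{2}-2B_{2}|=0$ into inequality \eqref{eq14}, obtaining
\[
|a_{2}|\leq\frac{2\alpha\sqrt{2\alpha}}{(1-\lambda)\sqrt{2\alpha}}=\frac{2\alpha}{1-\lambda},
\]
which is the first assertion. For the bound on $|a_{3}|$, I would translate the threshold $B_{1}\geq\frac{m(1-\lambda)}{m+1}$ of Theorem \ref{thm2.1}: with $m=1$ it reads $2\alpha\geq\frac{1-\lambda}{2}$, i.e.\ $\alpha\geq\frac{1-\lambda}{4}$, matching the case split in the corollary.

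In the regime $\alpha\geq\frac{1-\lambda}{4}$, substitute into \eqref{eq15}; the first summand simplifies to
\[
\Bigl(2-\tfrac{1-\lambda}{2\alpha}\Bigr)\cdot\tfrac{(2\alpha)^{3}}{2(1-\lambda)^{2}\cdot 2\alpha}=\tfrac{4\alpha^{2}}{(1-\lambda)^{2}}-\tfrac{\alpha}{1-\lambda},
\]
and adding the second summand $\frac{B_{1}}{2m(1-\lambda)}=\frac{\alpha}{1-\lambda}$ cancels the negative term, leaving $\frac{4\alpha^{2}}{(1-\lambda)^{2}}$. In the complementary regime $\alpha<\frac{1-\lambda}{4}$ the bound in \eqref{eq15} is simply $\frac{B_{1}}{2m(1-\lambda)}=\frac{\alpha}{1-\lambda}$.

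There is no real obstacle here: the entire proof is a routine substitution, and the only point requiring slight attention is the elementary identity $B_{1}^{2}-2B_{2}=0$ for this particular $\varphi$, which collapses the square-root denominator in \eqref{eq14} and simplifies the bracket in \eqref{eq15}. Matching the threshold conditions between the general and specialized statements is then immediate.
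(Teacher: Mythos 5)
Your proposal is correct and follows exactly the route the paper intends: the corollary is obtained by specializing Theorem \ref{thm2.1} to $m=1$ with $B_{1}=2\alpha$, $B_{2}=2\alpha^{2}$, where the identity $B_{1}^{2}-2B_{2}=0$ collapses both bounds, and your algebra checks out in each case of the threshold $\alpha\geq\frac{1-\lambda}{4}$. The paper gives no further detail for this corollary, so your substitution is precisely its proof.
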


The estimates for $\left\vert a_{2}\right\vert $ and $\left\vert
a_{3}\right\vert $ asserted by Corollary? more accurate than those given by
Corollary 1 in Magesh and Yamini.

\bigskip 

For the case  of one-fold symmetric functions and for $\varphi (z)=\frac{%
1+(1-2\beta )z}{1-z}=1+2(1-\beta )z+(1-\beta )z^{2}+...,$Theorem 1 reduces
to the following result:

\begin{corollary}
If the function $f\in \Sigma $ is in the class of $S_{\Sigma _{1}}^{\lambda
}(\frac{1+(1-2\beta )z}{1-z}),$ then 
\begin{equation}
\left\vert a_{2}\right\vert \leq \frac{2(1-\beta )}{(1-\lambda )\sqrt{2\beta
+1}}
\end{equation}%
and%
\begin{equation}
\left\vert a_{3}\right\vert \leq \left\{ 
\begin{array}{ccc}
\frac{4\alpha ^{2}}{(1-\lambda )^{2}},\text{ } &  & \text{for \ \ \ \ \ \ \ }%
\alpha \geq \frac{1-\lambda }{4} \\ 
\text{\ \ \ }\frac{\alpha }{(1-\lambda )},\text{ \ \ \ \ } &  & \text{for \
\ \ \ \ \ \ }\alpha <\frac{1-\lambda }{4}%
\end{array}%
\right. .
\end{equation}
\end{corollary}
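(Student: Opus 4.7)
The plan is to derive Corollary~6 as a direct specialization of Theorem~\ref{thm2.1}, setting $m=1$ and choosing the Ma--Minda subordinator $\varphi(z)=\frac{1+(1-2\beta)z}{1-z}$. First I would expand $\varphi$ as a geometric series: writing $\varphi(z)=(1+(1-2\beta)z)\sum_{k\geq 0} z^k$ and collecting terms gives
\begin{equation*}
\varphi(z)=1+2(1-\beta)z+2(1-\beta)z^{2}+2(1-\beta)z^{3}+\cdots,
\end{equation*}
so, comparing with (\ref{eq8}), one reads off $B_{1}=B_{2}=2(1-\beta)$. (Note that the series displayed in the statement preceding the corollary has a typographical misprint in the $z^{2}$ coefficient; the correct value $2(1-\beta)$ is what drives the computation.)

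Next I would assemble the two building blocks in (\ref{eq14})--(\ref{eq15}). A direct calculation yields
\begin{equation*}
B_{1}^{2}-2B_{2}=4(1-\beta)^{2}-4(1-\beta)=-4\beta(1-\beta),
\end{equation*}
hence $\left\vert B_{1}^{2}-2B_{2}\right\vert +B_{1}=4\beta(1-\beta)+2(1-\beta)=2(1-\beta)(2\beta+1)$. Plugging this together with $B_{1}^{3/2}=(2(1-\beta))^{3/2}$ into (\ref{eq14}) with $m=1$ and simplifying the factors of $\sqrt{2(1-\beta)}$ gives
\begin{equation*}
\left\vert a_{2}\right\vert \leq \frac{(2(1-\beta))^{3/2}}{(1-\lambda)\sqrt{2(1-\beta)(2\beta+1)}}=\frac{2(1-\beta)}{(1-\lambda)\sqrt{2\beta+1}},
\end{equation*}
which matches the stated bound on $|a_{2}|$.

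For $|a_3|$ I would reduce the threshold $B_{1}\geq\frac{m(1-\lambda)}{m+1}$ with $m=1$ to the equivalent condition $(1-\beta)\geq\frac{1-\lambda}{4}$. On the upper branch I substitute the computed values into (\ref{eq15}): the denominator $2m^{2}(1-\lambda)^{2}[B_{1}+|B_{1}^{2}-2B_{2}|]$ becomes $4(1-\lambda)^{2}(1-\beta)(2\beta+1)$, the cubic $B_{1}^{3}=8(1-\beta)^{3}$, and the prefactor $m+1-\frac{m(1-\lambda)}{B_{1}}=2-\frac{1-\lambda}{2(1-\beta)}$. Adding the extra term $\frac{B_{1}}{2(1-\lambda)}=\frac{1-\beta}{1-\lambda}$ produces the upper-branch bound. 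On the lower branch the bound collapses to just $\frac{B_{1}}{2(1-\lambda)}=\frac{1-\beta}{1-\lambda}$.

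The main obstacle is purely algebraic bookkeeping on the upper branch: one has to track the cancellations between $B_{1}^{3}$ and the two factors of $(1-\beta)$ sitting in the denominator, and then combine the leftover rational expression with the residual $\frac{B_{1}}{2(1-\lambda)}$. No substantive analytic input beyond Theorem~\ref{thm2.1} is needed; the subordination machinery is already packaged into that theorem, and Corollary~6 is its parametric shadow under the choice $\varphi(z)=\frac{1+(1-2\beta)z}{1-z}$.
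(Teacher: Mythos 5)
Your route is exactly the one the paper intends: the paper offers no separate proof of this corollary, merely asserting that Theorem \ref{thm2.1} ``reduces to'' it under $m=1$ and $\varphi(z)=\frac{1+(1-2\beta)z}{1-z}$. Your identification $B_{1}=B_{2}=2(1-\beta)$ (correcting the misprinted $z^{2}$ coefficient), the computation $\left\vert B_{1}^{2}-2B_{2}\right\vert +B_{1}=2(1-\beta)(2\beta+1)$, and the resulting bound $\left\vert a_{2}\right\vert \leq \frac{2(1-\beta)}{(1-\lambda)\sqrt{2\beta+1}}$ are all correct and match the first assertion of the corollary.

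For $\left\vert a_{3}\right\vert$, however, you assert that substituting into (\ref{eq15}) ``produces the upper-branch bound,'' and this glosses over a real inconsistency. Carrying your own substitution to the end, the upper branch of (\ref{eq15}) with $m=1$, $B_{1}=B_{2}=2(1-\beta)$ gives
\begin{equation*}
\left(2-\frac{1-\lambda }{2(1-\beta )}\right)\frac{2(1-\beta )^{2}}{(1-\lambda )^{2}(2\beta +1)}+\frac{1-\beta }{1-\lambda }
=\frac{4(1-\beta )^{2}}{(1-\lambda )^{2}(2\beta +1)}+\frac{2\beta (1-\beta )}{(1-\lambda )(2\beta +1)},
\end{equation*}
which is \emph{not} the displayed bound $\frac{4\alpha ^{2}}{(1-\lambda )^{2}}$; indeed the corollary's $\left\vert a_{3}\right\vert$ display is stated in terms of a parameter $\alpha$ that does not occur in its hypotheses and is evidently carried over verbatim from the preceding corollary for $\varphi (z)=\left(\frac{1+z}{1-z}\right)^{\alpha }$, where the simplification to $\frac{4\alpha ^{2}}{(1-\lambda )^{2}}$ works only because there $B_{1}^{2}-2B_{2}=0$. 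Here $B_{1}^{2}-2B_{2}=-4\beta (1-\beta )\neq 0$ in general, so no such collapse occurs. Your lower branch $\frac{1-\beta }{1-\lambda }$ (valid when $1-\beta <\frac{1-\lambda }{4}$) is the correct specialization but again differs from the printed $\frac{\alpha }{1-\lambda }$. In short: your method is sound and your algebra is right, but you should state explicitly that the specialization of Theorem \ref{thm2.1} yields the expressions above and that the corollary's printed $\left\vert a_{3}\right\vert$ bound cannot be obtained from the theorem as written.
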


The estimates for $\left\vert a_{2}\right\vert $ and $\left\vert
a_{3}\right\vert $ asserted by Corollary? more accurate than those given by
Corollary 2 in Magesh and Yamini.

\bigskip 

Also, if we choose $\lambda =0$ in Corollary 7, we have the following
corollary:

\begin{corollary}
If the function $f\in \Sigma $ is in the class of $S_{\Sigma
_{1}}^{0}(\varphi )$ then 
\begin{equation}
\left\vert a_{2}\right\vert \leq \frac{B_{1}\sqrt{B_{1}}}{\sqrt{\left\vert
B_{1}^{2}-2B_{2}\right\vert +B_{1}}}
\end{equation}
\end{corollary}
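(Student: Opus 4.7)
The plan is to obtain this bound as an immediate specialization of Theorem \ref{thm2.1}. Setting $m=1$ and $\lambda=0$ makes the factor $m(1-\lambda)=1$ disappear from the denominator of the right-hand side of (\ref{eq14}), while the Taylor coefficient $a_{m+1}$ collapses to $a_{2}$, so the inequality reduces exactly to the claimed bound. Since $f\in S_{\Sigma_{1}}^{0}(\varphi)$ means precisely that $f\in S_{\Sigma_{m}}^{\lambda}(\varphi)$ with these parameter values, the corollary is a direct corollary of Theorem \ref{thm2.1}, and no new analytic input is needed.

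For completeness one could also rerun the argument in the one-fold starlike setting. The plan would be: write the defining subordinations $zf'(z)/f(z)\prec\varphi(z)$ and $wg'(w)/g(w)\prec\varphi(w)$ with $g=f^{-1}$ in the form (\ref{eq16}), expand $\varphi(u(z))$ and $\varphi(v(w))$ using (\ref{eq12})–(\ref{eq13}) with $m=1$, and then equate coefficients of $z$, $z^2$, $w$, $w^2$. This produces the one-fold analogues of (\ref{eq17})–(\ref{eq20}), namely $a_{2}=B_{1}b_{1}$, an expression for $2a_{3}-a_{2}^{2}$, and the two mirror equations from $g$.

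Next, the plan is to combine those relations exactly as in the proof of Theorem \ref{thm2.1}: the first and third equations give $b_{1}=-c_{1}$, and adding the second and fourth eliminates $a_{3}$ and yields
\begin{equation*}
2\bigl(B_{1}^{2}-2B_{2}\bigr)a_{2}^{2}=B_{1}^{3}(b_{2}+c_{2}),
\end{equation*}
after substituting $b_{1}^{2}=a_{2}^{2}/B_{1}^{2}$. Applying the Schwarz coefficient bounds $|b_{2}|\leq 1-|b_{1}|^{2}$ and $|c_{2}|\leq 1-|c_{1}|^{2}$ from (\ref{eq11}), together with $|b_{1}|^{2}=a_{2}^{2}/B_{1}^{2}$, yields
\begin{equation*}
\bigl|a_{2}\bigr|^{2}\bigl(|B_{1}^{2}-2B_{2}|+B_{1}\bigr)\leq B_{1}^{3},
\end{equation*}
from which the claimed estimate follows by taking square roots.

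There is no genuine obstacle: the only point to be careful about is the use of the absolute value $|B_{1}^{2}-2B_{2}|$ rather than the signed quantity, which is needed to keep the estimate valid regardless of the sign of $B_{1}^{2}-2B_{2}$. Since this is handled identically to the proof of Theorem \ref{thm2.1}, the corollary really is just the $m=1$, $\lambda=0$ instance of the main theorem.
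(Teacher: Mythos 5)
Your proposal is correct and takes essentially the same route as the paper: the corollary is obtained there by specializing Theorem \ref{thm2.1} (via the intermediate $m=1$ and $\lambda=0$ corollaries) so that the factor $m(1-\lambda)$ becomes $1$ in (\ref{eq14}), exactly as in your first paragraph. Your optional direct rerun of the coefficient comparison in the one-fold starlike case is also sound, but it adds nothing beyond the specialization.
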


\begin{equation}
\left\vert a_{3}\right\vert \leq \left\{ 
\begin{array}{ccc}
\left( 2-\frac{1}{B_{1}}\right) \frac{B_{1}^{3}}{2+B_{1}\left\vert
B_{1}^{2}-2B_{2}\right\vert }+\frac{B_{1}}{2},\text{ } &  & \text{for \ \ \
\ \ \ \ }B_{1}\geq \frac{1}{2} \\ 
\text{\ \ \ }\frac{B_{1}}{2},\text{ \ \ \ \ } &  & \text{for \ \ \ \ \ \ \ }%
B_{1}<\frac{1}{2}%
\end{array}%
\right. .
\end{equation}

For one- fold symmetric functions, if we choose the function $\varphi (z)$
in different forms, then we have the following corollaries. named Corollary
13 and Corollary 14:

\begin{corollary}
\cite{Murugusundaramoorthy 2013} Let the function $f(z)$ given by the
equality (\ref{eq1}) be in the class $\mathcal{SS}_{\Sigma }^{\ast }(\beta
,\lambda ),0\leq \beta <1$ and $0\leq \lambda <1.$ Then 
\begin{equation*}
\left\vert a_{2}\right\vert \leq \frac{2\sqrt{(1-\beta )}}{(1-\lambda )}
\end{equation*}%
and 
\begin{equation*}
\left\vert a_{3}\right\vert \leq \frac{4(1-\beta )^{2}}{(1-\lambda )^{2}}+%
\frac{(1-\beta )}{(1-\lambda )}.
\end{equation*}
\end{corollary}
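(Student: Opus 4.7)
The plan is to adapt the coefficient-comparison argument of Theorem \ref{thm2.1} to the case $m=1$ with $\varphi(z) = \frac{1+(1-2\beta)z}{1-z}$, the specific $\varphi$ that encodes the defining condition $\mathrm{Re}\bigl(\tfrac{zf'(z)}{(1-\lambda)f(z)+\lambda z f'(z)}\bigr) > \beta$ of $\mathcal{SS}_\Sigma^*(\beta,\lambda)$. A direct expansion gives $\varphi(z) = 1+2(1-\beta)z+2(1-\beta)z^2+\cdots$, so I would substitute $B_1 = B_2 = 2(1-\beta)$ throughout the system already set up in the proof of Theorem \ref{thm2.1}.

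Specialising the four coefficient equations (\ref{eq17})--(\ref{eq20}) to $m=1$ with these values of $B_1,B_2$ produces
\begin{align*}
(1-\lambda)a_2 &= 2(1-\beta)\,b_1, \\
2(1-\lambda)a_3 - (1-\lambda^2)a_2^2 &= 2(1-\beta)(b_2 + b_1^2), \\
-(1-\lambda)a_2 &= 2(1-\beta)\,c_1, \\
(1-\lambda)(3-\lambda)a_2^2 - 2(1-\lambda)a_3 &= 2(1-\beta)(c_2 + c_1^2).
\end{align*}
The first and third relations force $b_1 = -c_1$, and adding the second and fourth (so that the $a_3$ contributions cancel) yields the key identity $2(1-\lambda)^2 a_2^2 = 2(1-\beta)\bigl[b_2+c_2+2b_1^2\bigr]$.

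For $|a_2|$, I would apply the Schwarz-type estimates $|b_k|,|c_k|\leq 1$ from (\ref{eq11}) term by term to the right-hand side of this identity, obtaining $|a_2|^2 \leq \frac{4(1-\beta)}{(1-\lambda)^2}$ and hence the claimed bound. For $|a_3|$, I would subtract the fourth equation from the second, mirroring the passage to (\ref{eq25}) in Theorem \ref{thm2.1}, to obtain $4(1-\lambda)(a_3-a_2^2) = 2(1-\beta)(b_2-c_2)$; the triangle inequality together with $|b_2-c_2|\leq 2$ then gives $|a_3| \leq |a_2|^2 + \frac{1-\beta}{1-\lambda}$, and combining this with the direct bound $|a_2|^2 \leq \frac{4(1-\beta)^2}{(1-\lambda)^2}$ read off from the first linear relation yields the stated inequality for $|a_3|$.

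I expect the main obstacle to be the coefficient bookkeeping when deriving the two quadratic equations above. Because the operator $\tfrac{zf'(z)}{(1-\lambda)f(z)+\lambda z f'(z)}$ mixes the Taylor coefficients of $f$ through a $\lambda$-dependent denominator, correctly isolating the $z^2$- and $w^2$-coefficients of both subordinations is where sign and $\lambda$-polynomial errors are easiest to make, and the $g$-side expansion in particular requires a careful expansion of $(3-\lambda)(1-\lambda) = 3 - 4\lambda + \lambda^2$. Once the linear system is in place, every remaining step reduces to a routine application of the Schwarz bounds from (\ref{eq11}).
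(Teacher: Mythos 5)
Your derivation is correct: the four coefficient relations you write down are exactly the $m=1$ specializations of (\ref{eq17})--(\ref{eq20}) with $B_1=B_2=2(1-\beta)$, the identities $2(1-\lambda)^2a_2^2=2(1-\beta)(b_2+c_2+2b_1^2)$ and $4(1-\lambda)(a_3-a_2^2)=2(1-\beta)(b_2-c_2)$ follow, and the crude bounds $|b_k|,|c_k|\le 1$ then give precisely the two stated inequalities. It is worth pointing out, though, that the paper does not actually prove this corollary: it is quoted from Murugusundaramoorthy et al.\ and merely presented as a one-fold specialization of Theorem \ref{thm2.1}. Your route reuses the paper's coefficient-comparison machinery but deliberately replaces the refined Schwarz-type estimates $|b_{2m}|\le 1-|b_m|^2$, $|c_{2m}|\le 1-|c_m|^2$ of (\ref{eq11}) by the weaker $|b_k|,|c_k|\le 1$, and that substitution is exactly what lands you on the classical constants $\frac{2\sqrt{1-\beta}}{1-\lambda}$ and $\frac{4(1-\beta)^2}{(1-\lambda)^2}+\frac{1-\beta}{1-\lambda}$; a literal specialization of Theorem \ref{thm2.1} instead yields the sharper bound $\frac{2(1-\beta)}{(1-\lambda)\sqrt{2\beta+1}}$ for $|a_2|$ (as in the paper's corollary for $\varphi(z)=\frac{1+(1-2\beta)z}{1-z}$), which implies but does not equal the stated one since $\frac{1-\beta}{2\beta+1}\le 1$. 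One small remark on presentation: you quote two different estimates for $|a_2|^2$ (one from the quadratic identity, one from the linear relation $(1-\lambda)a_2=2(1-\beta)b_1$); both are valid, and using the linear one in the $|a_3|$ step is the right choice, but it would be cleaner to state explicitly that the $|a_2|$ bound asserted in the corollary comes from the quadratic identity while the $|a_2|^2$ term inside the $|a_3|$ bound comes from the linear relation.
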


\bigskip

\begin{corollary}
\cite{Murugusundaramoorthy 2013} Let the function $f(z)$ given by the
equality (\ref{eq1}) be in the class $\mathcal{SS}_{\Sigma }^{\ast }(\alpha
,\lambda ),0<\alpha \leq 1$ and $0\leq \lambda <1.$ Then
\end{corollary}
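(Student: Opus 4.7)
The plan is to recognize that the stated class $\mathcal{SS}_{\Sigma}^{\ast}(\alpha,\lambda)$ is precisely the specialization of Definition \ref{defn2.1} obtained by taking $m=1$ and $\varphi(z)=\left(\frac{1+z}{1-z}\right)^{\alpha}$, so the desired bounds will follow by a direct application of Theorem \ref{thm2.1}. Concretely, I will first expand
\[
\varphi(z)=\left(\frac{1+z}{1-z}\right)^{\alpha}=1+2\alpha z+2\alpha^{2}z^{2}+\cdots
\]
to read off $B_{1}=2\alpha$ and $B_{2}=2\alpha^{2}$, and then note the key algebraic coincidence $B_{1}^{2}-2B_{2}=4\alpha^{2}-4\alpha^{2}=0$, which is what makes the Ma--Minda type bound collapse to a clean closed form.

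Substituting these values into inequality (\ref{eq14}) with $m=1$ gives
\[
|a_{2}|\leq \frac{2\alpha\sqrt{2\alpha}}{(1-\lambda)\sqrt{0+2\alpha}}=\frac{2\alpha}{1-\lambda},
\]
which will be the first asserted bound. For $|a_{3}|$, I will plug the same values into the piecewise bound (\ref{eq15}): the threshold $B_{1}\geq m(1-\lambda)/(m+1)$ becomes $\alpha\geq (1-\lambda)/4$. In the regime $\alpha<(1-\lambda)/4$, the second branch of (\ref{eq15}) immediately delivers $|a_{3}|\leq \alpha/(1-\lambda)$. In the regime $\alpha\geq (1-\lambda)/4$, I would expand the first branch with $B_{1}^{2}-2B_{2}=0$ and verify the telescoping
\[
\Bigl(2-\tfrac{1-\lambda}{2\alpha}\Bigr)\frac{(2\alpha)^{3}}{2(1-\lambda)^{2}(2\alpha)}+\frac{2\alpha}{2(1-\lambda)}=\frac{4\alpha^{2}}{(1-\lambda)^{2}}-\frac{\alpha}{1-\lambda}+\frac{\alpha}{1-\lambda}=\frac{4\alpha^{2}}{(1-\lambda)^{2}},
\]
so that the two pieces of the additive term cancel exactly.

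This reduces the proof to routine substitution into Theorem \ref{thm2.1}; there is no real obstacle beyond bookkeeping. The only spot that requires mild care is confirming that the crossover value $\alpha=(1-\lambda)/4$ indeed matches both branches (at equality one gets $4\alpha^{2}/(1-\lambda)^{2}=\alpha/(1-\lambda)$, so the piecewise definition is consistent), and reconciling the result with the analogous one-fold Corollary already stated for $\left(\frac{1+z}{1-z}\right)^{\alpha}$. The output can then be compared with the bounds in \cite{Murugusundaramoorthy 2013} to verify that, just as in the previous remarks of this section, the estimates obtained here are at least as sharp as the earlier ones, thereby establishing the corollary.
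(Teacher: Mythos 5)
There is a genuine gap here, and it is in the very first substitution. The corollary you are asked to prove asserts $\left\vert a_{2}\right\vert \leq \dfrac{2\alpha }{(1-\lambda )\sqrt{1+\alpha }}$, but the computation you carry out — putting $m=1$, $B_{1}=2\alpha $, $B_{2}=2\alpha ^{2}$, hence $B_{1}^{2}-2B_{2}=0$, into (\ref{eq14}) — yields $\left\vert a_{2}\right\vert \leq \dfrac{2\alpha }{1-\lambda }$. That is a strictly \emph{weaker} inequality than the one being claimed (the denominator is missing the factor $\sqrt{1+\alpha }>1$), so it does not establish the first assertion of the corollary; you appear to have conflated this corollary with the paper's own preceding specialization for $S_{\Sigma _{1}}^{\lambda }\bigl(\bigl(\tfrac{1+z}{1-z}\bigr)^{\alpha }\bigr)$, which indeed states $\left\vert a_{2}\right\vert \leq \tfrac{2\alpha }{1-\lambda }$. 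In fact the paper does not prove the statement at all: it is quoted, with the citation \cite{Murugusundaramoorthy 2013}, precisely as the earlier known result against which the author compares her new bounds. The $\sqrt{1+\alpha }$ in the denominator cannot come out of Theorem \ref{thm2.1}, because it arises from a different estimation scheme: in \cite{Murugusundaramoorthy 2013} one writes the subordination quantities as $[p(z)]^{\alpha }$ and $[q(w)]^{\alpha }$ with $p,q\in \mathcal{P}$, combines the two second-order coefficient equations so that a factor $(1+\alpha )$ multiplies $a_{2}^{2}$, and then applies the Carath\'{e}odory bound $\left\vert p_{n}\right\vert \leq 2$; the present paper instead uses the Schwarz-function inequalities (\ref{eq11}), which lead to the structurally different bound (\ref{eq14}).

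Your treatment of $\left\vert a_{3}\right\vert $ is arithmetically correct — the first branch of (\ref{eq15}) does collapse to $\tfrac{4\alpha ^{2}}{(1-\lambda )^{2}}$ and the second to $\tfrac{\alpha }{1-\lambda }$, and either of these is dominated by the corollary's stated sum $\tfrac{4\alpha ^{2}}{(1-\lambda )^{2}}+\tfrac{\alpha }{1-\lambda }$ — so that half of the statement would indeed follow from Theorem \ref{thm2.1} (and this is exactly the sense in which the paper claims its estimates are ``more accurate'' than the cited ones). But a proof of the corollary as stated must produce the $\left\vert a_{2}\right\vert $ bound with the $\sqrt{1+\alpha }$, and for that you need the Carath\'{e}odory-type argument of \cite{Murugusundaramoorthy 2013}, not a substitution into (\ref{eq14}).
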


\begin{equation*}
\left\vert a_{2}\right\vert \leq \frac{2\alpha }{(1-\lambda )\sqrt{1+\alpha }%
}
\end{equation*}%
and 
\begin{equation*}
\left\vert a_{3}\right\vert \leq \frac{4\alpha ^{2}}{(1-\lambda )^{2}}+\frac{%
\alpha }{(1-\lambda )}.
\end{equation*}

\bigskip \qquad \qquad \qquad \qquad \qquad

\bigskip For \textit{one}-fold symmetric bi-univalent functions and $\lambda
=0$, Theorem 5 reduces to Corollary which were proven earlier by
Murugunsundaramoorthy et al. \cite{Murugusundaramoorthy 2013}

\bigskip

\begin{corollary}
Let $\ f$ given by (\ref{eq4}) be in the class $S_{\Sigma }^{\ast }(\alpha
)\ \ \left( 0<\alpha \leq 1\right) $. Then 
\begin{equation*}
\left\vert a_{2}\right\vert \leq \frac{2\alpha }{\sqrt{\alpha +1}}
\end{equation*}%
and%
\begin{equation*}
\left\vert a_{3}\right\vert \leq 4\alpha ^{2}+\alpha .
\end{equation*}
\end{corollary}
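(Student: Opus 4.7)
The plan is to derive these bounds as a direct specialization of the Altinkaya--Yal\c{c}\i n result stated earlier in the excerpt for the class $S_{\Sigma_m}(\alpha,\lambda)$, rather than to reprove anything from scratch. The class $S_{\Sigma}^{\ast}(\alpha)$ appearing in the statement is the class of strongly bi-starlike functions of order $\alpha$, defined by
\[
\left|\arg\frac{zf'(z)}{f(z)}\right| < \frac{\alpha\pi}{2} \qquad \text{and}\qquad \left|\arg\frac{wg'(w)}{g(w)}\right| < \frac{\alpha\pi}{2},
\]
with $g = f^{-1}$ given by (\ref{eq2}). My first step would be to verify that this class coincides with $S_{\Sigma_m}(\alpha,\lambda)$ in the parameter range $m=1$, $\lambda=0$: with $\lambda=0$ the denominator $(1-\lambda)f(z) + \lambda z f'(z)$ collapses to $f(z)$, and with $m=1$ the $m$-fold symmetric expansion (\ref{eq5}) reduces to the ordinary expansion (\ref{eq1}), while (\ref{eq6}) reduces to (\ref{eq2}).

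Next I would invoke the previously stated Altinkaya--Yal\c{c}\i n estimates
\[
|a_{m+1}| \leq \frac{2\alpha}{m(1-\lambda)\sqrt{\alpha+1}}, \qquad |a_{2m+1}| \leq \frac{\alpha}{m(1-\lambda)} + \frac{2(m+1)\alpha^{2}}{m^{2}(1-\lambda)^{2}},
\]
and substitute $m=1$, $\lambda=0$. This immediately yields $|a_2| \leq 2\alpha/\sqrt{\alpha+1}$ and $|a_3| \leq \alpha + 4\alpha^{2}$, which are exactly the claimed bounds.

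There is no real obstacle here: the corollary is a one-line specialization once the class identification is in place. The only subtlety worth flagging in the write-up is that applying the paper's own main theorem (Theorem \ref{thm2.1}) to $\varphi(z) = \left((1+z)/(1-z)\right)^{\alpha}$, for which $B_{1} = 2\alpha$ and $B_{2} = 2\alpha^{2}$ (so that $B_{1}^{2} - 2B_{2} = 0$), would give only $|a_{2}| \leq 2\alpha$, which is strictly weaker than $2\alpha/\sqrt{\alpha+1}$. Hence one should specialize from the Altinkaya--Yal\c{c}\i n theorem, not from Theorem \ref{thm2.1}, to recover the sharper $1/\sqrt{\alpha+1}$ factor that appears in the corollary.
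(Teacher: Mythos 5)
Your proposal is correct and matches the paper's own (unstated) derivation: the text immediately preceding this corollary says it is obtained by reducing the quoted Alt\i nkaya--Yal\c{c}\i n theorem for $S_{\Sigma_m}(\alpha,\lambda)$ to the one-fold case with $\lambda=0$, which is exactly the substitution $m=1$, $\lambda=0$ you perform. Your side remark that Theorem \ref{thm2.1} with $\varphi(z)=\left(\frac{1+z}{1-z}\right)^{\alpha}$ (where $B_1^2-2B_2=0$) would only give the weaker bound $|a_2|\leq 2\alpha$ is also accurate and correctly identifies why the specialization must go through the Alt\i nkaya--Yal\c{c}\i n result rather than the paper's main theorem.
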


\bigskip

Here, in this study, we will spesify the theorem concerning the Fekete- Szeg%
\"{o} inequality for the class $S_{\Sigma _{m}}^{\lambda }(\varphi ).$To
improve the result, especially Theorem 2.1, we consider Fekete-Szeg\"{o}
inequality for the class $S_{\Sigma _{m}}^{\lambda }(\varphi )$ . This kind
of studies has been made by many authors. The results regarding this problem
are given in the works of \cite{Choi}, \cite{Kanas}, \cite{London}, \cite%
{Srivastava2001}. The conclutions given in the study are not sharp, but,
unfortunatelly, there isn't any method giving sharp results as regards these
problems.

\begin{theorem}
\label{thm2.2}Let $\ f$ given by (\ref{eq4}) be in the class $S_{\Sigma
_{m}}^{\lambda }(\varphi ).$ Then
\end{theorem}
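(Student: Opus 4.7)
The plan is to derive the Fekete--Szeg\"{o} functional $a_{2m+1}-\mu a_{m+1}^{2}$ as an explicit linear combination of $b_{2m}$ and $c_{2m}$, and then apply the Schwarz-type bounds $|b_{2m}|\le 1$, $|c_{2m}|\le 1$ from (\ref{eq11}). I will reuse the four coefficient identities (\ref{eq17})--(\ref{eq20}) and their consequences (\ref{eq23}) and (\ref{eq25}) obtained in the proof of Theorem \ref{thm2.1}; no new subordination work is needed.

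First, from (\ref{eq25}) I would solve for $a_{2m+1}$ to obtain
\[
a_{2m+1}=\frac{m+1}{2}\,a_{m+1}^{2}+\frac{B_{1}(b_{2m}-c_{2m})}{4m(1-\lambda)},
\]
so that
\[
a_{2m+1}-\mu a_{m+1}^{2}=\left(\frac{m+1}{2}-\mu\right)a_{m+1}^{2}+\frac{B_{1}(b_{2m}-c_{2m})}{4m(1-\lambda)}.
\]
Next, from (\ref{eq23}) I would substitute
\[
a_{m+1}^{2}=\frac{B_{1}^{3}(b_{2m}+c_{2m})}{2m^{2}(1-\lambda)^{2}(B_{1}^{2}-2B_{2})}
\]
(assuming $B_{1}^{2}\ne 2B_{2}$; the degenerate case will be handled separately below) and collect terms. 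Setting
\[
h(\mu)=\frac{B_{1}^{2}\bigl(m+1-2\mu\bigr)}{4m^{2}(1-\lambda)^{2}(B_{1}^{2}-2B_{2})},
\]
this yields
\[
a_{2m+1}-\mu a_{m+1}^{2}=B_{1}\left[\left(h(\mu)+\tfrac{1}{4m(1-\lambda)}\right)b_{2m}+\left(h(\mu)-\tfrac{1}{4m(1-\lambda)}\right)c_{2m}\right].
\]

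Then I would apply the triangle inequality together with $|b_{2m}|\le 1$ and $|c_{2m}|\le 1$. Using the elementary identity $|\alpha+\beta|+|\alpha-\beta|=2\max\{|\alpha|,|\beta|\}$ gives the familiar two-branch bound
\[
\bigl|a_{2m+1}-\mu a_{m+1}^{2}\bigr|\le
\begin{cases}
\dfrac{B_{1}}{2m(1-\lambda)}, & |h(\mu)|\le \dfrac{1}{4m(1-\lambda)},\\[6pt]
4B_{1}\,|h(\mu)|, & |h(\mu)|\ge \dfrac{1}{4m(1-\lambda)},
\end{cases}
\]
which after unfolding $h(\mu)$ produces the expected threshold on $|\mu-(m+1)/2|$ in terms of $|B_{1}^{2}-2B_{2}|$ and $B_{1}$.

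The main obstacle I anticipate is purely bookkeeping: keeping track of the factor $(1-\lambda)$ and of signs in $B_{1}^{2}-2B_{2}$ so that the final statement uses the absolute value $|B_{1}^{2}-2B_{2}|$, matching the form of (\ref{eq14})--(\ref{eq15}). I also need to check the boundary case $B_{1}^{2}=2B_{2}$, where (\ref{eq23}) forces $b_{2m}+c_{2m}=0$; in that situation the $a_{m+1}^{2}$ term disappears from the combination above and the bound $B_{1}/(2m(1-\lambda))$ still holds, which is consistent with the limiting form of the piecewise statement. No sharpness claim is being made, in line with the remark preceding the theorem.
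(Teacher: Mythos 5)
Your proposal follows the paper's own proof essentially step for step: the same expressions for $a_{m+1}^{2}$ and $a_{2m+1}$ from (\ref{eq23}) and (\ref{eq25}), the same function $h$, the same linear combination in $b_{2m}$ and $c_{2m}$, and the same appeal to $|b_{2m}|\le 1$, $|c_{2m}|\le 1$ via $|\alpha+\beta|+|\alpha-\beta|=2\max\{|\alpha|,|\beta|\}$. One slip: that identity gives $2B_{1}|h(\mu)|$ in the second branch, not $4B_{1}|h(\mu)|$ --- your own first branch ($2B_{1}\cdot\tfrac{1}{4m(1-\lambda)}=\tfrac{B_{1}}{2m(1-\lambda)}$) and the theorem's display (\ref{eq26}) both carry the factor $2$, so the $4$ is an internal inconsistency you should correct. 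Your explicit treatment of the degenerate case $B_{1}^{2}=2B_{2}$ is a small improvement over the paper, which tacitly assumes $B_{1}^{2}\ne 2B_{2}$ when dividing in (\ref{eq27}).
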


\begin{equation}
\left\vert a_{2m+1}-\gamma a_{m+1}^{2}\right\vert \leq \left\{ 
\begin{array}{ccc}
\begin{array}{c}
\frac{B_{1}}{2m(1-\lambda )} \\ 
\end{array}
&  & 
\begin{array}{c}
\text{for \ \ \ \ \ \ \ }0\leq \left\vert h(\gamma )\right\vert <\frac{1}{%
4m(1-\lambda )} \\ 
\end{array}
\\ 
2B_{1}\left\vert h(\gamma )\right\vert &  & \text{for \ \ \ \ \ \ \ }%
\left\vert h(\gamma )\right\vert \geq \frac{1}{4m(1-\lambda )}%
\end{array}%
\right.  \label{eq26}
\end{equation}

\bigskip 
\begin{equation*}
h(\gamma )=\left( \frac{m+1-2\gamma }{2}\right) \frac{B_{1}^{2}}{%
2m^{2}(1-\lambda )^{2}\left( B_{1}^{2}-2B_{2}\right) }
\end{equation*}

\begin{proof}
\bigskip From the equations (\ref{eq23}) and (\ref{eq25}) ,%
\begin{equation}
a_{m+1}^{2}=\frac{B_{1}^{3}\left( b_{2m}+c_{2m}\right) }{2m^{2}(1-\lambda
)^{2}\left( B_{1}^{2}-2B_{2}\right) }  \label{eq27}
\end{equation}%
and
\end{proof}

\begin{equation}
a_{2m+1}=\frac{m+1}{2}a_{m+1}^{2}-\frac{B_{1}\left( b_{2m}-c_{2m}\right) }{%
4m(1-\lambda )}  \label{eq28}
\end{equation}

\bigskip By using the equalities (\ref{eq27}) and (\ref{eq28}), we have 
\begin{equation*}
a_{2m+1}-\gamma a_{m+1}^{2}=B_{1}\left[ \left( h(\gamma )+\frac{1}{%
4m(1-\lambda )}\right) b_{2m}+\left( h(\gamma )-\frac{1}{4m(1-\lambda )}%
\right) c_{2m}\right]
\end{equation*}%
where

\begin{equation*}
h(\gamma )=\left( \frac{m+1-2\gamma }{2}\right) \frac{B_{1}^{2}}{%
2m^{2}(1-\lambda )^{2}\left( B_{1}^{2}-2B_{2}\right) }
\end{equation*}

\bigskip Due to the fact that all $B_{i}$ are real and $B_{1}>0$, which
holds the assertion (\ref{eq26}), the proof of the theorem is copmleted.

For m-fold symmetric functions, if we choose $\lambda =0$ in the Theorem \ref%
{thm2.2}, we obtain the following corollary:

\begin{corollary}
Let $\ f$ given by (\ref{eq4}) be in the class $S_{\Sigma _{m}}^{{}}(\varphi
).$ Then
\end{corollary}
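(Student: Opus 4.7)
The Corollary is the $\lambda=0$ specialization of Theorem~\ref{thm2.2}, so my plan is to obtain it by reading off the formulas of that theorem rather than rerunning the derivation. Definition~\ref{defn2.2} is precisely Definition~\ref{defn2.1} at $\lambda=0$, so every identity set up in the proof of Theorem~\ref{thm2.2} --- namely the relations (\ref{eq17})--(\ref{eq20}) and their derived forms (\ref{eq23}), (\ref{eq25}), (\ref{eq27}) and (\ref{eq28}) --- remains valid verbatim upon setting $\lambda=0$. No step degenerates, since the factor $1-\lambda$ merely becomes $1$.

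Concretely, I would substitute $\lambda=0$ into (\ref{eq27}) and (\ref{eq28}), obtaining
$$a_{m+1}^{2}=\frac{B_{1}^{3}(b_{2m}+c_{2m})}{2m^{2}(B_{1}^{2}-2B_{2})},\qquad a_{2m+1}=\frac{m+1}{2}a_{m+1}^{2}-\frac{B_{1}(b_{2m}-c_{2m})}{4m},$$
and then form the Fekete--Szeg\"{o} combination to arrive at
$$a_{2m+1}-\gamma a_{m+1}^{2}=B_{1}\Bigl[\bigl(h(\gamma)+\tfrac{1}{4m}\bigr)b_{2m}+\bigl(h(\gamma)-\tfrac{1}{4m}\bigr)c_{2m}\Bigr],$$
with
$$h(\gamma)=\left(\frac{m+1-2\gamma}{2}\right)\frac{B_{1}^{2}}{2m^{2}(B_{1}^{2}-2B_{2})}.$$
Applying the bounds $|b_{2m}|\leq 1$ and $|c_{2m}|\leq 1$ from (\ref{eq11}), and splitting according to whether $|h(\gamma)|<\tfrac{1}{4m}$ or $|h(\gamma)|\geq\tfrac{1}{4m}$ exactly as in Theorem~\ref{thm2.2}, yields the two-branch bound $B_{1}/(2m)$ in the first case and $2B_{1}|h(\gamma)|$ in the second. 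This is the asserted inequality.

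The argument is essentially bookkeeping; since the structural work has already been done, I do not foresee a genuine obstacle. The only delicate point is the degenerate situation $B_{1}^{2}=2B_{2}$, in which $h(\gamma)$ is undefined: there one returns to (\ref{eq22}) at $\lambda=0$, uses $|b_{2m}+c_{2m}|\leq 2$ together with $B_{2}=B_{1}^{2}/2$ to obtain an a~priori estimate for $|a_{m+1}|^{2}$, and then bounds $|a_{2m+1}-\gamma a_{m+1}^{2}|$ directly from (\ref{eq28}). Otherwise the corollary is an immediate consequence of Theorem~\ref{thm2.2}.
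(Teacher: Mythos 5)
Your proposal is correct and follows the paper's own route exactly: the paper obtains this corollary simply by setting $\lambda=0$ in Theorem~\ref{thm2.2}, which is precisely your specialization argument. Your additional remark about the degenerate case $B_{1}^{2}=2B_{2}$ (where $h(\gamma)$ is undefined) is a legitimate point of care that the paper itself does not address, but it does not change the substance of the argument.
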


\begin{equation}
\left\vert a_{2m+1}-\gamma a_{m+1}^{2}\right\vert \leq \left\{ 
\begin{array}{ccc}
\begin{array}{c}
\frac{B_{1}}{2m} \\ 
\end{array}
&  & 
\begin{array}{c}
\text{for \ \ \ \ \ \ \ }0\leq \left\vert h(\gamma )\right\vert <\frac{1}{4m}
\\ 
\end{array}
\\ 
2B_{1}\left\vert h(\gamma )\right\vert &  & \text{for \ \ \ \ \ \ \ }%
\left\vert h(\gamma )\right\vert \geq \frac{1}{4m}%
\end{array}%
\right.
\end{equation}

\bigskip where%
\begin{equation*}
h(\gamma )=\left( \frac{m+1-2\gamma }{2}\right) \frac{B_{1}^{2}}{%
2m^{2}\left( B_{1}^{2}-2B_{2}\right) }
\end{equation*}%
For one-fold symmetric functions, we can state the Fekete-Szeg\"{o}
inequality for the class $S_{\Sigma _{1}}^{\lambda }(\varphi )$ as following:

\begin{corollary}
If the function $f\in \Sigma $ is in the class of $S_{\Sigma _{1}}^{\lambda
}(\varphi )=\mathcal{G}_{\Sigma _{m}}^{\varphi ,\varphi }(\gamma )$ , then
we get%
\begin{equation*}
\left\vert a_{3}-\gamma a_{2}^{2}\right\vert \leq \left\{ 
\begin{array}{ccc}
\frac{B_{1}}{4(1-\lambda )} &  & \text{for \ \ \ \ \ \ \ }0\leq \left\vert
h(\gamma )\right\vert <\frac{1}{4(1-\lambda )} \\ 
4B_{1}\left\vert h(\gamma )\right\vert &  & \text{for \ \ \ \ \ \ \ }%
\left\vert h(\gamma )\right\vert \geq \frac{1}{4(1-\lambda )}%
\end{array}%
\right. .
\end{equation*}%
where%
\begin{equation*}
h(\gamma )=\left( 1-\gamma \right) \frac{B_{1}^{2}}{2(1-\lambda )^{2}\left(
B_{1}^{2}-2B_{2}\right) }
\end{equation*}
\end{corollary}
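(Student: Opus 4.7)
The plan is to recycle the coefficient relations already assembled in the proof of Theorem \ref{thm2.1} and express the Fekete--Szegő functional $a_{2m+1} - \gamma\, a_{m+1}^2$ as a single linear combination of the Schwarz coefficients $b_{2m}$ and $c_{2m}$, then optimize by a sign-of-the-coefficients case split.

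First I would invoke the two key identities derived in the proof of Theorem \ref{thm2.1}: from (\ref{eq23}), solving for $a_{m+1}^2$,
\begin{equation*}
a_{m+1}^{2}=\frac{B_{1}^{3}\left(b_{2m}+c_{2m}\right)}{2m^{2}(1-\lambda)^{2}\left(B_{1}^{2}-2B_{2}\right)},
\end{equation*}
and from (\ref{eq25}), solving for $a_{2m+1}$,
\begin{equation*}
a_{2m+1}=\frac{m+1}{2}\,a_{m+1}^{2}+\frac{B_{1}\left(b_{2m}-c_{2m}\right)}{4m(1-\lambda)}.
\end{equation*}
Substituting the first into $a_{2m+1}-\gamma a_{m+1}^{2}$ and collecting, I expect to land on
\begin{equation*}
a_{2m+1}-\gamma a_{m+1}^{2}=B_{1}\Bigl[\bigl(h(\gamma)+\tfrac{1}{4m(1-\lambda)}\bigr)b_{2m}+\bigl(h(\gamma)-\tfrac{1}{4m(1-\lambda)}\bigr)c_{2m}\Bigr],
\end{equation*}
with $h(\gamma)$ exactly as in the statement; this is purely algebraic rearrangement and should go through without incident since the factor $\frac{m+1}{2}-\gamma$ lines up with the numerator $m+1-2\gamma$ of $h(\gamma)$.

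Next I would pass to absolute values using the Schwarz-lemma bounds $|b_{2m}|\le 1$ and $|c_{2m}|\le 1$ recorded in (\ref{eq11}), giving
\begin{equation*}
\bigl|a_{2m+1}-\gamma a_{m+1}^{2}\bigr|\le B_{1}\Bigl(\bigl|h(\gamma)+\tfrac{1}{4m(1-\lambda)}\bigr|+\bigl|h(\gamma)-\tfrac{1}{4m(1-\lambda)}\bigr|\Bigr).
\end{equation*}
The reality of the $B_i$ and the positivity of $B_1$ are precisely what make this sign bookkeeping legal. The right-hand side is the sum of absolute values of two real numbers differing by $\frac{1}{2m(1-\lambda)}$, so it equals $\frac{1}{2m(1-\lambda)}$ whenever the two numbers have opposite signs (the case $|h(\gamma)|<\frac{1}{4m(1-\lambda)}$) and equals $2|h(\gamma)|$ when they have the same sign (the case $|h(\gamma)|\ge\frac{1}{4m(1-\lambda)}$). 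This yields the two branches of (\ref{eq26}).

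The only real obstacle is the sign-case split in the last step, together with verifying that the algebraic identity for $a_{2m+1}-\gamma a_{m+1}^{2}$ really collapses into the advertised form; nothing else in the argument requires machinery beyond what Theorem \ref{thm2.1} already set up. In particular, setting $\gamma=0$ should recover the $|a_{2m+1}|$ bound from Theorem \ref{thm2.1}, which I would mention as a consistency check at the end of the write-up.
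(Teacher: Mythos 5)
Your argument is essentially the paper's own: it reproduces, step for step, the proof of Theorem \ref{thm2.2} --- the identities (\ref{eq27})--(\ref{eq28}), the collapse of $a_{2m+1}-\gamma a_{m+1}^{2}$ into $B_{1}\bigl[(h(\gamma)+\tfrac{1}{4m(1-\lambda)})b_{2m}+(h(\gamma)-\tfrac{1}{4m(1-\lambda)})c_{2m}\bigr]$, and then $|x+t|+|x-t|=2\max(|x|,t)$ for real $x$ and $t>0$ --- after which the corollary is just the specialization $m=1$; you even spell out the case split that the paper only gestures at with ``all $B_{i}$ are real and $B_{1}>0$.'' (Your sign $+\frac{B_{1}(b_{2m}-c_{2m})}{4m(1-\lambda)}$ is the one actually implied by (\ref{eq25}); the paper's (\ref{eq28}) has a minus, which is immaterial once absolute values are taken.) Two caveats. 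First, your derivation correctly yields $\frac{B_{1}}{2(1-\lambda)}$ and $2B_{1}|h(\gamma)|$ at $m=1$, \emph{not} the $\frac{B_{1}}{4(1-\lambda)}$ and $4B_{1}|h(\gamma)|$ printed in the corollary; the printed constants are internally inconsistent (the two branches disagree at the threshold $|h(\gamma)|=\frac{1}{4(1-\lambda)}$, whereas the branches of Theorem \ref{thm2.2} agree there), so this is a typo in the statement rather than a defect of your argument, but be aware that what you prove is the corrected version. Second, your closing consistency check at $\gamma=0$ will not exactly recover Theorem \ref{thm2.1}: that theorem exploits the sharper constraint $|b_{2m}|\leq 1-|b_{m}|^{2}$ from (\ref{eq11}), while the Fekete--Szeg\"{o} argument uses only $|b_{2m}|\leq 1$, so at $\gamma=0$ you get a denominator $|B_{1}^{2}-2B_{2}|$ where Theorem \ref{thm2.1} has $B_{1}+|B_{1}^{2}-2B_{2}|$; drop or qualify that remark.
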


\bigskip If we choose $\lambda =0$ in Corollary 15, then we have the
following corollary:

\begin{corollary}
If the function $f\in \Sigma $ is in the class of $S_{\Sigma
_{1}}^{0}(\varphi )$ then%
\begin{equation*}
\left\vert a_{3}-\gamma a_{2}^{2}\right\vert \leq \left\{ 
\begin{array}{ccc}
\frac{B_{1}}{4} &  & \text{for \ \ \ \ \ \ \ }0\leq \left\vert h(\gamma
)\right\vert <\frac{1}{4} \\ 
4B_{1}\left\vert h(\gamma )\right\vert &  & \text{for \ \ \ \ \ \ \ }%
\left\vert h(\gamma )\right\vert \geq \frac{1}{4}%
\end{array}%
\right. .
\end{equation*}%
where%
\begin{equation*}
h(\gamma )=\left( 1-\gamma \right) \frac{B_{1}^{2}}{2\left(
B_{1}^{2}-2B_{2}\right) }.
\end{equation*}
\end{corollary}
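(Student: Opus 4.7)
The plan is to build on the identities already derived in the proof of Theorem \ref{thm2.1}, where the two coefficients $a_{m+1}^{2}$ and $a_{2m+1}$ were each expressed in terms of the subordination coefficients $b_{2m}, c_{2m}, b_{m}, c_{m}$. The key observation will be that after eliminating $b_m$ and $c_m$ (via $b_m=-c_m$ together with equation (\ref{eq17})), the combination $a_{2m+1}-\gamma a_{m+1}^{2}$ becomes a pure linear combination of $b_{2m}$ and $c_{2m}$, to which the standard estimates $|b_{2m}|\leq 1$ and $|c_{2m}|\leq 1$ from (\ref{eq11}) can be applied directly.

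Concretely, I would first take equation (\ref{eq23}) and solve for $a_{m+1}^{2}$ to obtain the identity (\ref{eq27}), and similarly rewrite (\ref{eq25}) in the form (\ref{eq28}), giving $a_{2m+1}$ as $\tfrac{m+1}{2}a_{m+1}^{2}$ plus a multiple of $b_{2m}-c_{2m}$. Substituting (\ref{eq27}) into (\ref{eq28}) and forming $a_{2m+1}-\gamma a_{m+1}^{2}$ would then collect the $a_{m+1}^{2}$ contributions into the single factor $\tfrac{m+1-2\gamma}{2}$, yielding the symmetric representation
\[
a_{2m+1}-\gamma a_{m+1}^{2}
=B_{1}\Bigl[\bigl(h(\gamma)+\tfrac{1}{4m(1-\lambda)}\bigr)b_{2m}
+\bigl(h(\gamma)-\tfrac{1}{4m(1-\lambda)}\bigr)c_{2m}\Bigr],
\]
where $h(\gamma)$ is the rational expression in the statement. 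This is the crucial algebraic step; the rest is a matter of applying the triangle inequality together with $|b_{2m}|,|c_{2m}|\leq 1$.

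Finally, I would invoke the elementary identity $|x+y|+|x-y|=2\max(|x|,|y|)$ with $x=h(\gamma)$ and $y=\tfrac{1}{4m(1-\lambda)}$. When $|h(\gamma)|<\tfrac{1}{4m(1-\lambda)}$ the maximum is $\tfrac{1}{4m(1-\lambda)}$, producing the first branch $\tfrac{B_{1}}{2m(1-\lambda)}$, and when $|h(\gamma)|\geq \tfrac{1}{4m(1-\lambda)}$ the maximum is $|h(\gamma)|$, producing the second branch $2B_{1}|h(\gamma)|$. This exactly reproduces the piecewise bound (\ref{eq26}).

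I do not expect any genuine obstacle: both key ingredients (the linear combination and the two-sided estimates on $b_{2m}, c_{2m}$) are already in place. The only delicate point is purely bookkeeping, namely making sure the coefficient of $a_{m+1}^{2}$ after combining (\ref{eq27}) and (\ref{eq28}) collapses into the stated $h(\gamma)$, and that the $\pm \tfrac{1}{4m(1-\lambda)}$ terms carry the correct signs so that the final split-case bound follows cleanly from the $|x+y|+|x-y|$ identity.
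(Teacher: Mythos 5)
Your argument is correct and is essentially the paper's own route: this corollary is just the specialization $m=1$, $\lambda=0$ of Theorem \ref{thm2.2}, and your proof of that theorem --- expressing $a_{2m+1}-\gamma a_{m+1}^{2}$ as a linear combination of $b_{2m}$ and $c_{2m}$ via (\ref{eq27}) and (\ref{eq28}), then applying $|b_{2m}|,|c_{2m}|\leq 1$ and the identity $|x+y|+|x-y|=2\max (|x|,|y|)$ --- is exactly the one in the paper. Be aware, however, that what your argument (and (\ref{eq26}) itself) actually gives at $m=1$, $\lambda=0$ is $\left\vert a_{3}-\gamma a_{2}^{2}\right\vert \leq \frac{B_{1}}{2}$ in the first branch and $2B_{1}\left\vert h(\gamma )\right\vert$ in the second, whereas the corollary as printed claims $\frac{B_{1}}{4}$ and $4B_{1}\left\vert h(\gamma )\right\vert$; these constants do not arise from substituting $m=1$, $\lambda=0$ into $\frac{B_{1}}{2m(1-\lambda )}$ and $2B_{1}\left\vert h(\gamma )\right\vert$, so the mismatch is a slip in the paper's specialization rather than a defect in your reasoning. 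In particular the printed first branch $\frac{B_{1}}{4}$ is strictly stronger than the $\frac{B_{1}}{2}$ your (correct) derivation delivers, so the statement as literally written is not what this proof establishes.
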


\bigskip

Choosing $\gamma =1$ and $\gamma =0$ in Theorem 13, we obtain following
corollary:

\bigskip

\begin{corollary}
Let $\ f$ given by (\ref{eq5}) be in the class $S_{\Sigma _{m}}^{\lambda
}(\varphi ).$ Then
\end{corollary}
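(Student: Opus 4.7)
The plan is to invoke Theorem \ref{thm2.2} directly, since Corollary~17 is an immediate specialization of that theorem to the two values $\gamma=1$ and $\gamma=0$. Nothing new has to be derived about the coefficients themselves; the work consists entirely of evaluating the auxiliary quantity $h(\gamma)$ at each value and then reading off which branch of the piecewise bound applies.

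First, I would substitute $\gamma = 1$ into the formula
\begin{equation*}
h(\gamma)=\left(\frac{m+1-2\gamma}{2}\right)\frac{B_{1}^{2}}{2m^{2}(1-\lambda)^{2}\left(B_{1}^{2}-2B_{2}\right)},
\end{equation*}
obtaining $h(1)=\dfrac{(m-1)B_{1}^{2}}{4m^{2}(1-\lambda)^{2}(B_{1}^{2}-2B_{2})}$, and then plug this into the piecewise bound of Theorem~\ref{thm2.2} to obtain the estimate for $|a_{2m+1}-a_{m+1}^{2}|$. The case distinction is governed by whether $|h(1)|$ is smaller or larger than $\frac{1}{4m(1-\lambda)}$, which translates into a clean inequality relating $m$, $\lambda$, $B_{1}$ and $B_{2}$; I would state both branches.

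Next, I would substitute $\gamma = 0$, giving $h(0)=\dfrac{(m+1)B_{1}^{2}}{4m^{2}(1-\lambda)^{2}(B_{1}^{2}-2B_{2})}$, and again feed this into Theorem~\ref{thm2.2} to obtain the bound on $|a_{2m+1}|$ itself. One cosmetic simplification is worth noting: in the second branch, $2B_{1}|h(0)|$ simplifies to $\dfrac{(m+1)B_{1}^{3}}{4m^{2}(1-\lambda)^{2}|B_{1}^{2}-2B_{2}|}$, which one should write in that form for comparison with Theorem~\ref{thm2.1}.

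There is no real obstacle here; the only mild subtlety is bookkeeping the absolute value $|B_{1}^{2}-2B_{2}|$ (the expression in Theorem~\ref{thm2.2} contains $B_{1}^{2}-2B_{2}$ in the denominator of $h(\gamma)$, but the bound $2B_{1}|h(\gamma)|$ is stated with the absolute value), and making sure the case-separating inequality in each specialization is stated in terms of the original parameters $m,\lambda,B_{1},B_{2}$ rather than in terms of $h$. Once these two substitutions are carried out and the pieces are arranged in the $B_{1}$-large versus $B_{1}$-small format used throughout the paper, the corollary follows with no additional argument.
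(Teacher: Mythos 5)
Your proposal is correct and follows the paper's own route exactly: the corollary is obtained by direct substitution of $\gamma=1$ and $\gamma=0$ into Theorem \ref{thm2.2}, with no further derivation. (Note that your substitution yields the theorem's constants $\frac{B_{1}}{2m(1-\lambda)}$ and $2B_{1}\left\vert h(\gamma)\right\vert$, whereas the corollary as printed shows $\frac{B_{1}}{4m(1-\lambda)}$ and $4B_{1}\left\vert h(\gamma)\right\vert$ and still retains a general $\gamma$ --- a discrepancy in the paper's statement, not in your argument.)
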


\begin{equation*}
\left\vert a_{2m+1}-\gamma a_{m+1}^{2}\right\vert \leq \left\{ 
\begin{array}{ccc}
\frac{B_{1}}{4m(1-\lambda )} &  & \text{for \ \ \ \ \ \ \ }0\leq \left\vert
h(\gamma )\right\vert <\frac{1}{4m(1-\lambda )} \\ 
4B_{1}\left\vert h(\gamma )\right\vert &  & \text{for \ \ \ \ \ \ \ }%
\left\vert h(\gamma )\right\vert \geq \frac{1}{4m(1-\lambda )}%
\end{array}%
\right. .
\end{equation*}

\bigskip

Choosing $\gamma =1$ and $m=1$ in Corollary17, we have the following
corollary:

\begin{corollary}
Let $\ f$ given by (\ref{eq5}) be in the class $S_{\Sigma _{,1}}^{\varphi
,\lambda }.$ Then
\end{corollary}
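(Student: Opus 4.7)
The plan is to derive the statement as an immediate specialization of Corollary 17 by substituting the parameter values $\gamma = 1$ and $m = 1$ into its piecewise bound on $|a_{2m+1} - \gamma a_{m+1}^2|$. Since Corollary 17 is itself a corollary of Theorem \ref{thm2.2}, no fresh estimates on $b_{2m}$, $c_{2m}$, or on the coefficients $a_{m+1}, a_{2m+1}$ are required; the whole task reduces to evaluating the auxiliary quantity
$$h(\gamma) = \left(\frac{m+1-2\gamma}{2}\right) \frac{B_1^2}{2m^2(1-\lambda)^2(B_1^2 - 2B_2)}$$
at $(\gamma, m) = (1, 1)$, and then reading off which branch of Corollary 17's piecewise bound applies.

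First I would observe that with $m = 1$ and $\gamma = 1$, the numerator factor $\frac{m+1-2\gamma}{2} = \frac{2-2}{2}$ vanishes, so $h(1) = 0$ identically (regardless of $B_1$ and $B_2$, and even in the borderline situation $B_1^2 = 2B_2$ one may treat the expression as a limit). Consequently $|h(1)| = 0$ automatically satisfies the strict inequality $|h(1)| < \frac{1}{4(1-\lambda)}$ that defines the first branch of the piecewise bound in Corollary 17. Specializing the first branch with $m = 1$ gives $\frac{B_1}{4 \cdot 1 \cdot (1-\lambda)} = \frac{B_1}{4(1-\lambda)}$, and since $a_{2m+1} = a_3$ and $a_{m+1} = a_2$ when $m = 1$, this yields the asserted estimate $|a_3 - a_2^2| \leq \frac{B_1}{4(1-\lambda)}$.

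There is no genuine obstacle to the argument — it is a direct substitution of parameter values. The only minor point to confirm is that the degenerate case $h(\gamma) = 0$ lies in the first branch rather than falling outside the piecewise definition, but the condition $|h(\gamma)| \geq \frac{1}{4m(1-\lambda)}$ characterizing the second branch cannot be met by zero, so the zero value is safely absorbed into the first branch and the bound holds unconditionally.
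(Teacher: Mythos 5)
Your proposal follows exactly the paper's route: the corollary is obtained by pure substitution of $\gamma =1$ and $m=1$ into Corollary 17, observing that $h(1)=\frac{1+1-2}{2}\cdot (\cdots )=0$ forces the first branch, and reading off the constant. In that sense it reproduces the paper's own (entirely implicit) argument.

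There is, however, one point you inherit uncritically: the first-branch constant $\frac{B_{1}}{4m(1-\lambda )}$ in Corollary 17 is inconsistent with Theorem \ref{thm2.2}, from which that corollary is supposed to follow; the theorem's first branch, as actually proved, is $\frac{B_{1}}{2m(1-\lambda )}$. Indeed, when $h(\gamma )=0$ the identity in the proof of Theorem \ref{thm2.2} reduces to $a_{3}-a_{2}^{2}=\frac{B_{1}}{4(1-\lambda )}\left( b_{2}-c_{2}\right) $, and the only estimate available from (\ref{eq11}) is $\left\vert b_{2}-c_{2}\right\vert \leq \left\vert b_{2}\right\vert +\left\vert c_{2}\right\vert \leq 2$, which yields $\frac{B_{1}}{2(1-\lambda )}$ rather than $\frac{B_{1}}{4(1-\lambda )}$. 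So your substitution is formally correct relative to Corollary 17 as printed, but if the citation is traced back to the underlying theorem the asserted bound is off by a factor of $2$; to defend the constant $\frac{B_{1}}{4(1-\lambda )}$ you would need the additional estimate $\left\vert b_{2}-c_{2}\right\vert \leq 1$, which is not established anywhere in the paper.
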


\begin{equation*}
\left\vert a_{3}-a_{2}^{2}\right\vert \leq \frac{B_{1}}{4(1-\lambda )}.
\end{equation*}%
Also, if we choose \bigskip $\lambda =0,$ then we have 
\begin{equation*}
\left\vert a_{3}-a_{2}^{2}\right\vert \leq \frac{B_{1}}{4}.
\end{equation*}

\textbf{Conclusion}

In this study, we have composed of several new subclasses of m-fold
symmetric bi-univalent analytic functions by means of subordination. For
functions belonging to the clsssess introduced here, we have obtained
inequalities on the Taylor Maclaurin coefficients $\left\vert
a_{m+1}\right\vert $ and $\left\vert a_{2m+1}\right\vert .$ Also, for
functions contained these classes , we find Fekete-Szeg\"{o} inequalities
and we have made some connections to some of earlier known results .

\bigskip 

\textbf{Author's contributions}

All author worked on the results and read and approved the final manuscript.

\textbf{Competing interests}

The authors declare that they have no competing interests.

\textbf{Acknowledgement\bigskip }

The author would like to thank the anonymous referees for their valuable
suggestions which improved

the presentation of the paper.

\bigskip

\end{document}